\pgfplotsset{compat=1.18}
\crefname{subsection}{Subsection}{Subsections}
\crefname{figure}{Figure}{Figures}
\newtheorem{theorem}{Theorem}[section]
\newtheorem{definition}{Definition}[section]
\newtheorem{remark}{Remark}[section]
\newtheorem{lemma}{Lemma}[section]
\newtheorem{proposition}{Proposition}[section]
\newtheorem{example}{Example}
\newtheorem{assumption}{Assumption}
\newcommand{\R}{\mathbb{R}}
\newcommand{\eps}{\varepsilon}
\newcommand{\N}{\mathbb{N}}
\newcommand{\dt}{\,\textup{d}t}
\newcommand{\ds}{\,\textup{d}s}
\newcommand{\ddt}{\frac{\textup{d}}{\textup{d}t}}
\newcommand{\dz}{\,\textup{d}z}
\newcommand{\dx}{\,\textup{d}x}
\newcommand{\dy}{\,\textup{d}y}
\newcommand{\dxy}{\,\textup{d}(x,y)}
\newcolumntype{M}[1]{>{\centering\arraybackslash}m{#1}}
\DeclarePairedDelimiter{\abs}{\lvert}{\rvert}
\DeclarePairedDelimiter{\norm}{\lVert}{\rVert}
\title{Unifying local and nonlocal corrosion frameworks:\\ A convergent nonlocal extension of the KKS phase-field model}
\author{Christian J. Cyron$^{1,2}$,\;Marvin Fritz$^3$,\;Alexander Hermann$^{1,4,\ast}$,\\\;Tobias K\"oppl$^5$,\;Arman Shojaei$^1$,\;Stewart Silling$^6$}
\date{%
	$^1$Institute of Material Systems Modeling, Helmholtz-Zentrum Hereon, \\Geesthacht Germany\\ [2ex]
	$^2$Institute for Continuum and Material Mechanics, Hamburg University of Technology,  Hamburg, Germany\\ [2ex]
	$^3$Johann Radon Institute for Computational and Applied Mathematics, \\Linz, Austria \\ [2ex]
	$^4$CAU Innovation GmbH, Kiel, Germany  \\ [2ex]
	$^5$Fraunhofer-Institut FOKUS, Berlin, Germany \\ [2ex]
	$^6$Nearforce LLC, Albuquerque, NM, USA \\ [2ex]
	$^\ast$Correspondence: alexander.hermann@hereon.de \\ [2ex]
	\today
}
\begin{document}

\maketitle

\begin{abstract}
We introduce a nonlocal extension of the Kim-Kim-Suzuki (KKS) phase-field corrosion model aimed at bridging local and nonlocal corrosion modeling approaches, such as phase-field and peridynamic frameworks. In this formulation, classical gradient operators are replaced with integral operators defined over a finite interaction horizon, naturally embedding an intrinsic length scale that aligns with nonlocal theories like peridynamics. Under precise assumptions on function spaces and kernel functions, we define a nonlocal free energy that integrates a standard bulk free energy density with a nonlocal interaction term. Through differentiation in an appropriate Hilbert space, we derive evolution equations, yielding a nonlocal Allen--Cahn equation for the phase-field and a nonlocal Cahn--Hilliard-type equation for the concentration. The latter is expressed as a gradient flow in a metric induced by the inverse of the nonlocal operator, mirroring the classical \(H^{-1}\) metric for conserved dynamics. We establish the well-posedness of these equations using Galerkin approximations, uniform energy estimates, and compactness arguments. Furthermore, we prove the convergence of the nonlocal model to its local KKS counterpart as the interaction horizon approaches zero, effectively unifying local and nonlocal perspectives. Numerical experiments, implemented via finite difference spatial discretization and explicit time-stepping, demonstrate the effects of nonlocality and confirm the theoretical convergence, reinforcing the connection between the two modeling paradigms.
\end{abstract}
\ \\
\textbf{keywords:} phase-field, nonlocal, corrosion, well-posedness, nonlocal-to-local convergence, explicit scheme, peridynamics, Allen--Cahn/Cahn--Hilliard system, Kim--Kim--Suzuki phase-field model

\section{Introduction}
\label{sec:introduction}

Nonlocal phase-field models have attracted significant attention in scientific computing because they naturally incorporate long-range interactions and introduce an intrinsic length scale into the governing equations \cite{silling2000reformulation}. Unlike classical local formulations that employ differential operators---which capture only infinitesimal interactions---nonlocal formulations replace these operators with integral operators that account for interactions over a finite horizon. This replacement not only provides a natural regularization mechanism for evolving interfaces but also establishes a direct connection with peridynamic theories, where the interaction horizon is a fundamental parameter \cite{silling2000reformulation}.

In many applications, including phase transitions \cite{choksi2009phase} and corrosion phenomena (e.g., in biodegradable magnesium implants \cite{hermann2025nonlocal,mai2016phase}), classical local models fail to capture important multiscale effects due to the absence of an intrinsic length scale. For instance, in the two-phase metal--electrolyte setting of corrosion, the classical KKS (Kim--Kim--Suzuki) phase-field model \cite{kim1999phase}---with a free energy comprising a bulk term and a gradient term---does not explicitly incorporate a finite spatial interaction range. Recent studies have demonstrated that introducing nonlocal interactions via integral operators can significantly alter interface dynamics, sometimes even producing phenomena that are not observed in local theory \cite{burkovska2021nonlocal,ren2021nonlocal,fritz2019local}.

Peridynamic models, initially proposed by Silling \cite{silling2000reformulation}, are reformulations of
classical local continuum mechanics in the form of nonlocal integral differential
equations. With the spatial derivatives of the stress tensor replaced by integrals of
force density functions, a nonlocal model allows the natural treatment of balance
laws on and off material discontinuities. We refer to the overview articles \cite{silling2007peridynamic,silling2010peridynamic} and the books \cite{bobaru2016handbook,madenci2013peridynamic} on perdiynamic modeling.

Motivated by these developments and rigorous convergence results in nonlocal-to-local settings of Cahn--Hilliard and Allen--Cahn models \cite{abels2024strong,davoli2021nonlocal,elbar2023degenerate,trussardi2019nonlocal}, we present in this work a rigorous nonlocal formulation of the KKS phase-field corrosion model. Our approach is based on defining a free energy functional that couples a standard bulk free energy density with a nonlocal interaction term characterized by a finite cut-off radius, akin to those employed in peridynamic models \cite{silling2000reformulation} and in tumor growth models with long-range interaction radius \cite{fritz2023tumor,fritz2019local}. We rigorously derive the associated evolution equations via differentiation in a suitable Hilbert space framework. Our derivation yields a nonlocal Allen--Cahn equation for the phase-field and a nonlocal Cahn--Hilliard-type equation for the concentration, the latter formulated as a gradient flow in a metric induced by the inverse of the nonlocal operator, analogous to the classical \(H^{-1}\)-metric that naturally enforces mass conservation.

While our analysis, based on Galerkin approximations, uniform energy estimates, and compactness arguments, establishes the well-posedness of both equations and demonstrates that, under standard smoothness assumptions and proper scaling, the nonlocal model converges to its classical local counterpart, our primary focus is on clarifying the impact of the finite interaction horizon on the solution's behavior. Furthermore, we highlight a structural equivalence between the nonlocal KKS model and peridynamic corrosion models under specific parameter regimes, thereby bridging phase-field and peridynamic approaches to corrosion modeling. 

The remainder of the paper is organized as follows: In Section~\ref{sec:math-framework} we introduce the mathematical framework and state our assumptions. Section~\ref{sec:evolution} reviews the classical local KKS phase-field corrosion model. Furthermore, we introduce the nonlocal free energy functional and derive the corresponding evolution equations via  differentiation. In Section~\ref{sec:wellposed_analysis}, we prove the existence and uniqueness of the decoupled and fully coupled nonlocal models.  Section~\ref{sec:limit} discusses the nonlocal-to-local convergence of the derived operators, while Section~\ref{sec:discretization_and_experiments} presents our finite difference spatial discretization and explicit time-stepping scheme and presents numerical experiments on a two-dimensional pitting corrosion benchmark, and Section~\ref{sec:conclusion} concludes with a discussion and outlook for future work.

\section{Mathematical framework and standing assumptions}
\label{sec:math-framework}
In the following, we state the notational conventions and assumptions that are used throughout the paper. In our work, \(\Omega\subset\mathbb{R}^d\) (\(d\in\{1,2,3\}\)) is a bounded Lipschitz domain and \(T>0\) is a fixed final time. The mean-free space of square-integrable functions is denoted by $\mathring{L}^2(\Omega)$. Moreover, we write $\lesssim$ instead of $\leq C$ when omitting a generic constant $C$, which may also change from line to line.
\begin{assumption}[Kernel properties]
	\label{assump:kernel}
	The kernel $J_\delta: \mathbb{R}^d \to \mathbb{R}$, $\delta>0$, satisfies:\\[-0.5em]
	\begin{enumerate}[label=\textup{(J\arabic*)}, ref=J\arabic*, leftmargin=.9cm] 
		\item \textbf{Nonnegativity}: $J_\delta(z) \geq 0$ for all $z \in \mathbb{R}^d.$ \label{assump:nonneg}
		\item \textbf{Radial symmetry}: $J_\delta(z) = j_\delta(\abs{z})$ for some $j_\delta: [0,\infty) \to \mathbb{R}.$
		\item \textbf{Compact support}: $\textup{supp}(J_\delta) \subseteq B_\delta(0).$
		\item \textbf{Embeddings}: \label{assump:comp}
		$V_\delta \subset L^4(\Omega)$ and $V_\delta \Subset L^2(\Omega)$ where 
		$$V_\delta = \Bigl\{ u \in L^2(\Omega) : 
		|u|_{{V_\delta}}^2 := \frac12 \iint_{\Omega \times \Omega} J_{\delta}(x-y) [u(x)-u(y)]^2  \dxy < \infty \Bigr\}.
		$$
	\end{enumerate}
\end{assumption}
\ \\
Before mentioning relevant examples of kernels satisfying these stated properties, we state one more assumption on the kernel function $J_\delta$ that becomes relevant when considering the nonlocal-to-local convergence $\delta \to 0$ in Section~\ref{sec:limit}.
\begin{assumption}[Additional kernel property]\label{assump:kernel2}
	The kernel $J_\delta: \mathbb{R}^d \to \mathbb{R}$, $\delta>0$, satisfies:\\[-0.5em]
	\begin{enumerate}[label=(J\arabic*), ref=J\arabic*, start=5, leftmargin=.9cm] 
		\item \textbf{Second-moment normalization}:$
		\int_{\mathbb{R}^d} J_\delta(z)\abs{z}^2  \dz = 2d$. \label{assump:2ndnorm}
	\end{enumerate}
\end{assumption}

\begin{example} \label{Ex:Kernels}
	The stated properties \eqref{assump:nonneg}--\eqref{assump:2ndnorm} are satisfied by: 
	\begin{itemize}
		\item the top-hat kernel  \begin{equation} \label{Eq:NormalizationJdelta}
			J_\delta(z) = c_\delta \chi_{B_\delta(0)}(z) \quad \forall z \in \mathbb{R}^d, \quad
			c_\delta= \frac{2d(d+2)}{\delta^{d+2} |S^{d-1}|},
		\end{equation}
		where \(S^{d-1}\) denotes the surface of the unit sphere in \(\mathbb{R}^d\);
		\item the truncated fractional kernel \cite[Eq.~(2.7)]{du2023nonlocal}
		\[    J_\delta(z)=c_\delta\abs{z}^{-d-2s}\chi_{B_\delta(0)}(z) \quad \forall z \in \mathbb{R}^d, \quad d/2 < s < 2.
		\]
		where $c_\delta$ is chosen as in \cite[Eq.~(2.20)--(2.22)]{du2023nonlocal}, depending on the size of $\delta$, such that $J_\delta \to -\Delta$ as $\delta \to 0$ and $J_\delta \to (-\Delta)^s$ as $\delta \to \infty$.
	\end{itemize}
	The first three properties of Assumption~\ref{assump:kernel} are easy to check for the top-hat and truncated fractional kernels. Regarding the compact embedding \eqref{assump:comp}, we refer to \cite[Thm.~1.9]{scott2024nonlocal} and \cite[Cor~3.12]{du2019nonlocal}.   It is well-known for these examples that it holds $J_\delta u \to -\Delta u$ as $\delta \to 0$, see \cite{du2019nonlocal,du2012analysis} for further details. In this regard, it also holds that $\mathring{V}_\delta \to \mathring{H}^1$ as $\delta \to 0$. Furthermore, in these examples, there is a nonlocal Poincaré inequality of the form $$\|u\|_{L^2} \leq C_P(\delta) |u|_{V_\delta}^2 \qquad \forall u \in \mathring{V}_\delta,$$ 
	that is proved in \cite[Prop.~4.1]{du2012analysis}, \cite[Lem.~3.7]{gunzburger2010nlvc}, \cite[Lem.~3.2]{du2023nonlocal}, \cite[Prop.~3.22]{du2019nonlocal}. In such a case, the seminorm $|\cdot|_{V_\delta}$ is an equivalent norm on $\mathring{V}_\delta$ in the sense of
	\begin{equation*} 
		|u|_{V_\delta}^2 = \langle \mathcal{L}_\delta u,u \rangle \geq \kappa(\delta) \|u\|_{V_\delta}^{2}:=\kappa(\delta) \big( \|u\|_{L^{2}(\Omega)}^{2}+|u|_{{V_\delta}}^{2} \bigr) \quad \forall u \in \mathring{V}_\delta,
	\end{equation*}
	where $\kappa(\delta)=(1+C_P(\delta))^{-2}$.
\end{example}
\ \\
The seminorm on $V_\delta$ naturally induces the semi-inner product
\begin{equation}
(u,v)_{{V_\delta}} 
:=\frac12 \iint_{\Omega \times \Omega} J_{\delta}(x-y) [u(x)-u(y)] [v(x)-v(y)]  \dxy \quad \forall u,v \in V_\delta,
\label{eq:V_nl_inner}
\end{equation}
and we define the corresponding nonlocal operator \(\mathcal{L}_\delta:V_\delta \to V_\delta^*\) by 
$$\langle  \mathcal{L}_\delta u, v\rangle=(u,v)_{{V_\delta}} \quad \forall u,v \in V_\delta,$$
using the Riesz representation theorem applied to the bilinear form \((\cdot,\cdot)_{V_\delta}\). Due to the Cauchy--Schwarz inequality
\(
|(u,v)_{V_\delta}| \le |u|_{{V_\delta}}\abs{v}_{{V_\delta}}\) for any $u,v \in V_\delta$, it holds that 
\[ \|\mathcal L_\delta u\|_{V_\delta^{*}} \le |u|_{{V_\delta}}.
\]
This confirms that \(\mathcal{L}_\delta u\) is a bounded linear functional on \(V_\delta\) even when \(u\) is not mean-free.
The nonlocal operator $\mathcal L_\delta$ has been treated in \cite[Thm.~4.4 \& discussion preceding §4.4]{du2012analysis} (fractional, after extending to $\R^{d}$) and \cite[§6, eqs.,(6.2)-(6.3)]{gunzburger2010nlvc} (top-hat). This operator is self-adjoint, positive semi-definite and it is easy to verify that $$\langle \mathcal L_\delta u,1 \rangle=(u,1)_{V_\delta} = 0 \quad \forall u \in V_\delta,$$ that is, $\mathcal L_\delta$ vanishes on constant functions and naturally embeds into $\mathring{V}_\delta^*$.

\begin{assumption}[Free energy density properties] \label{assump:free_energy_density}
The local free energy density $f\in C^{1}(\mathbb{R}^2;\R)$ satisfies:\\[-0.5em]
\begin{enumerate}[label=\textup{(F\arabic*)}, ref=F\arabic*, leftmargin=.9cm]  
	\item \label{assump:splitting} \textbf{Splitting}: $\exists f_1 \in C^{1}(\mathbb{R}^2;\R), f_2 \in C^{1}(\mathbb{R};\R_{\geq 0})$ such that, for any $\phi,c \in \R$,
	$$f(\phi,c)=f_1(\phi,c)+f_2(\phi).$$
	\item \label{assump:affine} \textbf{Affine linearity}: $\exists A>0$, $h_1 \in C^{1,1}(\R)$ with $h_1(0)=0$, $h_2 \in C_b(\R)$, $h_3 \in C(\R)$ with $|h_3(x)| \lesssim 1+\abs{x}$ (linear growth) for any $x\in\R$ such that, for any $\phi,c \in \R$, 
	$$\partial_c f(\phi,c)=\partial_c f_1(\phi,c)=Ac+h_1(\phi), \qquad \partial_\phi f_1(\phi,c)=ch_2(\phi)+h_3(\phi)$$
	\item \label{assump:growth} \textbf{Growth condition}: $\exists C_1,C_2,C_3>0$ such that, for any $\phi,c \in \R$,
	$$\begin{aligned}
		|\partial_\phi f_1(\phi,c)| &\leq C_1 (1+\abs{c}+\abs{\phi}), \quad |\partial_\phi f_2(\phi)| \leq C_2 (1+\abs{\phi}^3), \quad
		|\partial_c f(\phi,c)| \leq C_3 (1+\abs{c}+\abs{\phi});
	\end{aligned}$$
	\item \label{assump:semi} \textbf{Semicoercivity}: $\exists C_4,C_5,C_6,C_7>0$ such that, for any $\phi,c \in \R$,
	$$
	\begin{aligned}
		\bigl(\partial_c f(\phi,c), c \bigr)_{V_\delta} &\geq C_4 \abs{c}_{{V_\delta}}^{2} - C_5 \abs{\phi}_{{V_\delta}}^2, \quad
		\bigl(\partial_\phi f_2(\phi), \phi \bigr)_{L^2} \geq C_6 \norm{\phi}_{L^4}^4 - C_7 \norm{\phi}_{L^2}^2. 
	\end{aligned}
	$$
\end{enumerate}
\end{assumption}
The growth condition and the semicoercivity regarding $\partial_c \phi$ follow directly from its affine linear structure, but we have still written it down for convenience.
Before addressing the free energy density in our applications of the Kim--Kim--Suzuki model, we assume a further property of $f$ that will be relevant for proving the uniqueness of solutions.
\begin{assumption}[Additional properties] \label{assump:free_energy_density2}
The local free energy density $f \in C^{1}(\R^2;\R)$ satisfies:\\[-0.5em]
\begin{enumerate}[label=\textup{(F\arabic*)}, ref=F\arabic*, start=5,leftmargin=.9cm] 
	\item \label{assump:growthenh} \textbf{Enhanced growth condition}: $\exists C_8>0$ such that, for any $\phi,c \in \R$,
	$$|\partial_\phi f(\phi,c)|\leq C_8(1+\abs{c}+\abs{\phi});$$
	\item \label{assump:lipenh} \textbf{Joint Lipschitz continuity}: $\exists C_9>0$ such that, for any $\phi,c \in \R$,
	\[\begin{aligned}
		|\partial_\phi f(\phi_1,c_1)-\partial_\phi f(\phi_2,c_2)|
		&\le C_9 \bigl( (1+\abs{c_1}+\abs{\phi_1})|\phi_1-\phi_2|+|c_1-c_2| \bigr).
	\end{aligned}\]
\end{enumerate}
\end{assumption}

\begin{example}\label{Ex:KKSdensity}
We consider the local free energy density from the Kim--Kim--Suzuki model \cite[Eq.~(15)]{kim1999phase}:
\begin{equation}\label{eq:KKSf}
	f(\phi,c) = A\bigl(c - h(\phi)(1 - c_L) - c_L\bigr)^2 + \omega g(\phi) \quad \forall \phi, c \in \mathbb{R},
\end{equation}
with constants \(A, \omega, c_L > 0\), and functions:\\[-0.5em]
\begin{itemize}
	\item $g$ is a double-well potential, e.g., $g(\phi) = \phi^2(1-\phi)^2$, which we may extend quadratically outside the interval $[0,1]$ to ensure the enhanced growth condition \eqref{assump:growthenh}.
	\item $h$ is of the form $h(\phi) =\phi^2(3-2\phi)$, which we extend quadratically outside of $[0,1]$ to obtain the Lipschitz continuity of $h$ as required for $h_1$ in \eqref{assump:affine}. 
\end{itemize} 
\ \\
We now verify that $f$ defined by \eqref{eq:KKSf} satisfies Assumptions \ref{assump:free_energy_density} and \ref{assump:free_energy_density2}. \\[-0.5em]
\begin{enumerate}[label=\textup{(F\arabic*)}, ref=F\arabic*]
	\item The splitting is a follows directly from \eqref{eq:KKSf}. Define $f_1(\phi, c)= A\bigl(c - h(\phi)(1 - c_L) - c_L\bigr)^2$ and
	$f_2(\phi) = \omega g(\phi)$.
	Clearly, $f_1 \in C^1(\R^2; \R)$ and $f_2 \in C^1(\R; \R_{\geq 0})$.
	
	\item The partial derivative with respect to $c$ is:
	\begin{equation} \label{Eq:DerivFc}
		\partial_c f(\phi,c) = \partial_c f_1(\phi,c) = 2A\bigl[c - h(\phi)(1 - c_L) - c_L\bigr] = 2A c + B h(\phi),
	\end{equation}
	where $B = -2A(1-c_L)$. This is of the required affine linear form, choosing the function $h_1=Bh$, which is Lipschitz continuous if $h$ is cut-off accordingly. The derivative with respect to $\phi$ is:
	\begin{equation} \label{Eq:DerivFphi}
		\partial_\phi f(\phi,c)=\partial_\phi f_1(\phi,c)+\omega g'(\phi) = -2A\bigl[c - h(\phi)(1 - c_L) - c_L\bigr] h'(\phi)(1-c_L) + \omega g'(\phi),
	\end{equation}
	which is again of the required form, choosing $h_2(\phi)=-2Ah'(\phi)(1-c_L)$ \ \\
	and $h_3(\phi)=2A[h(\phi)(1-c_L)+c_L]h'(\phi)(1-c_L)$.
	
	\item From \eqref{Eq:DerivFc}, we immediately get $|\partial_c f(\phi,c)| \lesssim 1 + \abs{c}$, satisfying the third inequality in \eqref{assump:growth}. 
	Since $h'$ is bounded, $h$ is Lipschitz continuous with $h(\phi)\lesssim \phi$ and $g'(\phi)=2\phi(1-\phi)(1-2\phi)$ has cubic growth (which is preserved under quadratic extension), we indeed obtain from \eqref{Eq:DerivFphi}
	\begin{align*}
		|\partial_\phi f_1(\phi,c)| &= |-2A\bigl[c - h(\phi)(1 - c_L) - c_L\bigr] h'(\phi)(1-c_L)| \lesssim 1 + \abs{c}+\abs{\phi}, \\
		|\partial_\phi f_2(\phi)| &= |\omega g'(\phi)| \lesssim 1 + \abs{\phi}^3.
	\end{align*}
	
	\item Using \eqref{Eq:DerivFc} and the definition of the $V_\delta$-seminorm, we obtain:
	\begin{align*}
		\bigl(\partial_c f(\phi,c), c \bigr)_{V_\delta} 
		&= A \iint_{\Omega\times\Omega} J_\delta(x-y) \bigl\{ [c(x)-c(y)] - (1-c_L)[h(\phi(x))-h(\phi(y))] \bigr\}  [c(x)-c(y)]  \dxy \\
		&= A \abs{c}_{V_\delta}^2 - A(1-c_L) \iint_{\Omega\times\Omega}  J_\delta(x-y) [h(\phi(x))-h(\phi(y))] (c(x)-c(y))  \dxy.
	\end{align*}
	Applying the Cauchy-Schwarz and Young's inequalities to the second term, and using the Lipschitz continuity of $h$ with $h(0)=0$ (which implies $|h(\phi)|_{V_\delta} \lesssim \abs{\phi}_{V_\delta}$), yields:
	\begin{align*}
		\bigl(\partial_c f(\phi,c), c \bigr)_{V_\delta} &\geq A \abs{c}_{V_\delta}^2 - \frac{A}{2} \abs{c}_{V_\delta}^2 - C |h(\phi)|_{V_\delta}^2 
		\geq \frac{A}{2} \abs{c}_{V_\delta}^2 - C \abs{\phi}_{V_\delta}^2.
	\end{align*}
	This is the first bound in \eqref{assump:semi}. For the second bound, we use the specific form of $g$:
	\begin{align*}
		\bigl(\partial_\phi f_2(\phi), \phi \bigr)_{L^2} &= \omega \int_\Omega g'(\phi) \phi  dx = \omega \int_\Omega 2(2\phi^4 - 3\phi^3 + \phi^2)  dx.
	\end{align*}
	Using Young's inequality ($-3\phi^3 \geq -(\phi^4 + (9/4)\phi^2)$) pointwise, we find:
	\begin{align*}
		2(2\phi^4 - 3\phi^3 + \phi^2) &\geq 2(2\phi^4 - \phi^4 - \frac{9}{4}\phi^2 + \phi^2) = 2\phi^4 - \frac{5}{2}\phi^2.
	\end{align*}
	Integrating gives  the second bound in \eqref{assump:semi}, that is, $\bigl(\partial_\phi f_2(\phi), \phi \bigr)_{L^2} \geq 2\omega \norm{\phi}_{L^4}^4 - C \norm{\phi}_{L^2}^2$.
	
	\item  From \eqref{Eq:DerivFphi}, the boundedness of $h$ and $h'$, and the fact that our quadratic extension of $g$ ensures $|g'(\phi)| \lesssim 1 + \abs{\phi}$, it indeed follows that:
	$$|\partial_\phi f(\phi,c)| \lesssim \vert c\vert + 1 + \abs{\phi}.$$
	
	\item  We analyze the difference using \eqref{Eq:DerivFphi}:
	\begin{align*}
		&|\partial_{\phi}f(\phi_1,c_1)-\partial_\phi f(\phi_2,c_2)| \lesssim |g'(\phi_1)-g'(\phi_2)| + |c_1 h'(\phi_1) - c_2 h'(\phi_2)| + |h(\phi_1)h'(\phi_1) - h(\phi_2)h'(\phi_2)|.
	\end{align*}
	The first term is handled by the Lipschitz continuity of $g'$ (due to its quadratic extension). For the second term, we obtain:
	\begin{align*}
		|c_1 h'(\phi_1) - c_2 h'(\phi_2)| &\leq |h'(\phi_2)| |c_1 - c_2| + \abs{c_1} |h'(\phi_1) - h'(\phi_2)| \\
		&\lesssim |c_1 - c_2| + \abs{c_1} |\phi_1 - \phi_2|.
	\end{align*}
	The third term is handled similarly using the Lipschitz continuity of $h$ and $h'$ (which follows due to the quadratic extension of $h$) 
	\begin{align*}
		|h(\phi_1) h'(\phi_1) - h(\phi_2) h'(\phi_2)| &\leq |h'(\phi_2)| |h(\phi_1) - h(\phi_2)| + |h(\phi_1)| |h'(\phi_1) - h'(\phi_2)| \\
		&\lesssim (1+ \abs{\phi_1}) |\phi_1 - \phi_2|.
	\end{align*}
	Combining these estimates finally yields the desired result, that is,
	$$
	|\partial_{\phi}f(\phi_1,c_1)-\partial_\phi f(\phi_2,c_2)| \lesssim (1 + \abs{c_1}+\abs{\phi_1}) |\phi_1 - \phi_2| + |c_1 - c_2|.
	$$
\end{enumerate}
\end{example}

\section{Local and nonlocal evolution equations}\label{sec:evolution}

In this section, the classical (local) KKS phase-field corrosion model as derived in \cite{kim1999phase} is reviewed. Next, its nonlocal extension by computing G\^{a}teaux derivatives of the corresponding nonlocal free energy and formulating nonlocal gradient flows are derived.

\subsection{Local KKS phase-field corrosion model}\label{subsec:local}

In the classical formulation of the KKS phase-field corrosion model, one defines a Ginzburg--Landau type local free energy functional by
\[
\mathcal{F}_{\mathrm{local}}[\phi,c] = \int_\Omega \Big\{ f(\phi,c) + \frac{\alpha_\phi}{2} |\nabla \phi|^2 + \frac{\alpha_c}{2} |\nabla c|^2 \Big\}\dx,
\]
where the free energy density $f \in C^1(\R^2;\R)$ is as specified in Assumption~\ref{assump:free_energy_density}. In this context, the parameters \(\alpha_\phi,\alpha_c>0\) quantify the energetic penalty associated with spatial variations in \(\phi\) and $c$, respectively, and thus control the thickness of the interfacial layer between the phases. The evolution of the phase-field $\phi$ is derived as the \(L^2\)- gradient flow of \(\mathcal{F}_{\mathrm{local}}\), and therefore the resulting equation is of Allen--Cahn type. However, the evolution of the concentration $c$ is obtained as an \(H^{-1}\)-gradient flow (which guarantees mass conservation), leading to a Cahn--Hilliard type equation. Specifically, one obtains the coupled Allen--Cahn/Cahn--Hilliard system
\begin{equation} \label{Eq:GradientFlowACCH}
	\begin{aligned}
		\partial_t \phi &= -L\frac{\delta \mathcal{F}_{\mathrm{local}}}{\delta \phi}, \qquad 
		\partial_t c = \nabla\cdot \left\{ M\nabla\frac{\delta \mathcal{F}_{\mathrm{local}}}{\delta c} \right\},
	\end{aligned}
\end{equation}
where \(L>0\) is the kinetic coefficient for the phase-field evolution and \(M>0\) is the mobility constant for the concentration evolution. A direct computation shows that
\[
\begin{aligned}
	\frac{\delta \mathcal{F}_{\mathrm{local}}}{\delta \phi} &= \partial_\phi f(\phi,c) - \alpha_\phi\Delta \phi, \qquad 
	\frac{\delta \mathcal{F}_{\mathrm{local}}}{\delta c} = \partial_c f(\phi,c)- \alpha_c\Delta c.
\end{aligned}
\]
Thus, the local KKS phase phield corrosion model takes the form
\begin{align}
	\partial_t \phi &= -L\Bigl\{\partial_\phi f(\phi,c) - \alpha_\phi\Delta\phi\Bigr\}, \label{eq:localAC}\\[1mm]
	\partial_t c &= \nabla\cdot\Bigl\{ M\nabla\Bigl(\partial_c f(\phi,c)- \alpha_c\Delta c\Bigr) \Bigr\}. \label{eq:localCH}
\end{align}
These equations have to be interpreted in a weak sense (with appropriate initial and boundary conditions) and, under standard assumptions, one can establish well-posedness (see, e.g., \cite{mai2016phase}). Note that the classical model does not incorporate an additional length scale beyond that determined by \(\alpha_\phi\), which motivates the nonlocal formulation.

\subsection{Nonlocal KKS phase-field corrosion model}
\label{subsec:nonlocal}

To incorporate a finite interaction horizon and introduce an intrinsic length scale, we replace local gradient terms by nonlocal integral operators. The nonlocal Helmholtz-type free energy, see e.g. \cite{fritz2019unsteady,gal2017nonlocal}, is defined by
\[
\begin{aligned}
	\mathcal{F}[\phi,c]
	&=
	\int_{\Omega} f\bigl(\phi(x),c(x)\bigr)\dx
	+
	\frac{\alpha_{\phi}}{4}
	\iint_{\Omega \times \Omega} J_{\delta}(x-y)[\phi(x)-\phi(y)]^2\dxy
	\\
	&+
	\frac{\alpha_{c}}{4}
	\iint_{\Omega \times \Omega} J_{\delta}(x-y)[c(x)-c(y)]^2\dxy,
\end{aligned}
\]
where \(J_{\delta}\) is the scaled kernel from Assumption~\ref{assump:kernel}, and \(\alpha_{\phi},a_c\geq 0\) are again penalty-like coefficients. Using the definition of the seminorm in $V_\delta$, see \eqref{assump:comp}, we may shortly write
\[
\mathcal{F}[\phi,c]
=
\int_{\Omega} f\bigl(\phi,c\bigr)\dx
+
\frac{\alpha_{\phi}}{4}
\abs{\phi}_{V_\delta}^2
+
\frac{\alpha_{c}}{4}
\abs{c}_{V_\delta}^2.
\]
The Gateaux derivatives (or: first variations) of \(\mathcal{F}\) at \((\phi,c)\) in directions \(\eta,\zeta\in C_0^\infty(\Omega)\) are defined by
\[
D_{\phi}\mathcal{F}[\phi,c]\{\eta\}
:=\left.\frac{d}{d\eps}\mathcal{F}[\phi+\eps\eta,c]\right|_{\eps=0},
\quad
D_{c}\mathcal{F}[\phi,c]\{\zeta\}
:=\left.\frac{d}{d\eps}\mathcal{F}[\phi,c+\eps\zeta]\right|_{\eps=0}.
\]
Moreover, the Gateaux derivatives relate to the functional derivatives $\frac{\delta\mathcal{F}}{\delta\phi}$ and $\frac{\delta\mathcal{F}}{\delta c}$ of $\mathcal{F}$ with repsect to $\phi$ and $c$, respectively, as follows
$$D_{\phi}\mathcal{F}[\phi,c]\{\eta\}=:\int_\Omega \frac{\delta\mathcal{F}}{\delta\phi} \eta  \dx, \qquad D_{c}\mathcal{F}[\phi,c]\{\zeta\}=:\int_\Omega \frac{\delta\mathcal{F}}{\delta c} \zeta  \dx.$$
First, the Gateaux derivative w.r.t.\ \(\phi\) reads
\[
\begin{aligned}
	D_{\phi}\mathcal{F}[\phi,c]\{\eta\}
	&=\int_{\Omega}\frac{d}{d\eps} f(\phi+\eps\eta,c)\Big|_{\eps=0}\dx \\
	&+\frac{\alpha_{\phi}}{4}
	\iint_{\Omega\times\Omega} J_{\delta}(x-y)\frac{d}{d\eps}\Bigl[(\phi+\eps\eta)(x)-(\phi+\eps\eta)(y)\Bigr]^2\Big|_{\eps=0}
	\dx\dy,
\end{aligned}
\]
and a short computation gives
\[
\begin{aligned} D_{\phi}\mathcal{F}[\phi,c]\{\eta\}
	&=\int_{\Omega}\partial_\phi f(\phi,c)\eta \dx
	+\frac{\alpha_{\phi}}{2}
	\iint_{\Omega\times\Omega}J_{\delta}(x-y)[\phi(x)-\phi(y)][\eta(x)-\eta(y)]\dx\dy \\
	&=(\partial_\phi f(\phi,c),\eta)_{L^2}
	+\alpha_{\phi} (\phi,\eta)_{V_\delta},
\end{aligned}
\]
which we may rewrite using the nonlocal operator $\mathcal{L}_\delta$, see \eqref{eq:V_nl_inner}, as 
\[
D_{\phi}\mathcal{F}[\phi,c]\{\eta\}
=\int_{\Omega}\bigl\{\partial_\phi f(\phi,c)
+\alpha_{\phi} \mathcal{L}_\delta \phi \bigr\} \eta\dx \quad \forall \eta \in C_0^\infty(\Omega).
\]
The fundamental lemma of the calculus of variations yields the functional derivative of $\mathcal{F}$ with respect to $\phi$, that is,
\begin{equation}
	\label{Eq:FuncDerivPhi}
	\dfrac{\delta\mathcal{F}}{\delta\phi}(x)
	=\partial_\phi f(\phi(x),c(x))
	+\alpha_{\phi}\mathcal{L}_\delta \phi.
\end{equation}
Using the same techniques, the Gateaux derivative w.r.t.\ \(c\) reads
\[
D_{c}\mathcal{F}[\phi,c]\{\zeta\}
=\int_{\Omega}\frac{d}{d\eps} f(\phi,c+\eps\zeta)\Big|_{\eps=0}\dx = \int_{\Omega} \bigl\{ \partial_c f(\phi,c) + \alpha_c \mathcal{L}_\delta c\bigr\} \zeta \dx.
\]
Invoking the fundamental lemma of the calculus of variations, it yields the following functional derivative of $\mathcal{F}$ with respect to $c$:
\begin{equation}
	\label{Eq:FuncDerivC}
	\dfrac{\delta\mathcal{F}}{\delta c}
	=\partial_c f(\phi,c) + \alpha_c \mathcal{L}_\delta c.
\end{equation}

We now formulate the nonlocal evolution equations as gradient flows of \(\mathcal{F}[\phi,c]\).  First, we consider the standard $L^2$-gradient flow for $\phi$ as in the local KKS model \eqref{Eq:GradientFlowACCH} but we replace the functional derivative of the local energy $\mathcal{F}_\text{local}$ by its nonlocal counterpart, that is \eqref{Eq:FuncDerivPhi}, which yields
\begin{equation}
	\label{Eq:NonlocalKKS}
	\begin{aligned}
		\partial_t \phi
		&= -L\frac{\delta \mathcal{F}}{\delta \phi}
		= -L\bigl\{\partial_\phi f(\phi,c)
		+ \alpha_{\phi}\mathcal{L}_\delta \phi \bigr\}. 
	\end{aligned}
\end{equation}
For \(\phi\), which is not mass conserved, the natural inner product is the \(L^2\)-scalar product.  For \(c\), mass conservation requires an \(H^{-1}\)-type structure. Thus, instead of assuming an $H^{-1}$-gradient flow governing $c$ as we have done in the local setting, we consider a nonlocal gradient flow based on the inner product in $V_\delta^*$. The dual space $V_\delta^*$ (that later becomes the nonlocal $H^{-1}$-type space) has the inner product:
\begin{equation}
	\langle u, v \rangle_{V_\delta^*}
	= \langle u, \mathcal{L}_\delta^{-1} v \rangle_{L^2}
	= \int_{\Omega} u(x) (\mathcal{L}_\delta^{-1} v)(x)  \dx.
	\label{eq:V_nl_dual_inner}
\end{equation}
Thus, the $V_\delta^*$ gradient flow reads 
$$
\partial_t c = -M \mathcal{L}_\delta \Big(\frac{\delta\mathcal{F}}{\delta c}\Big)=-M \mathcal{L}_\delta \big(\partial_c f(\phi,c)+\alpha_c \mathcal{L}_\delta c\big).
$$
Integration over \(\Omega\) and using $\int_\Omega \mathcal{L}_\delta u  \dx =0$ shows that the concentration equation conserves mass, that is, $\ddt \int_{\Omega} c\dx = 0$.
To prove energy dissipation, one computes
\[
\ddt\mathcal{F}[\phi,c]
= \int_{\Omega} \frac{\delta \mathcal{F}}{\delta \phi}\frac{\partial \phi}{\partial t}\dx
+ \int_{\Omega} \frac{\delta \mathcal{F}}{\delta c}\frac{\partial c}{\partial t}\dx.
\]
Substituting \(\partial_t \phi = -L\delta\mathcal{F}/\delta\phi\) and \(\partial_t c = M\mathcal{L}_\delta(-\delta\mathcal{F}/\delta c)\) leads to
\[
\ddt\mathcal{F}[\phi,c]
= -L \int_{\Omega} \Bigl|\frac{\delta \mathcal{F}}{\delta \phi}\Bigr|^2\dx
- M \int_{\Omega} \frac{\delta \mathcal{F}}{\delta c}\mathcal{L}_\delta\Bigl(\frac{\delta \mathcal{F}}{\delta c}\Bigr)\dx,
\]
from which we conclude dissipativity \(\ddt\mathcal{F}\le 0\) as \(J_\delta\) is symmetric, see Assumption~\ref{assump:free_energy_density}, and hence
\[
\int_{\Omega} \frac{\delta \mathcal{F}}{\delta c}\mathcal{L}_\delta\Bigl(\frac{\delta \mathcal{F}}{\delta c}\Bigr)\dx
= \frac{1}{2}\iint_{\Omega\times\Omega} J_{\delta}(x-y)\Bigl[\frac{\delta \mathcal{F}}{\delta c}(x)
- \frac{\delta \mathcal{F}}{\delta c}(y)\Bigr]^2\dx\dy
\ge0.
\]
In total, we consider in this work the nonlocal KKS phase-field corrosion model
\begin{align} \label{Eq:ModelNonlocalAC}
	\partial_t \phi
	&= -L\bigl[\partial_\phi f(\phi,c)
	+ \alpha_{\phi}\mathcal{L}_\delta \phi \bigr], \\ \label{Eq:ModelNonlocalCH}\partial_t c &= -M \mathcal{L}_\delta[\partial_c f(\phi,c)+\alpha_c \mathcal{L}_\delta c].
\end{align}
Intuitively, one could say that we simply replaced all Laplace operators $-\Delta$ in the local KKS model \eqref{eq:localAC}--\eqref{eq:localCH} by the nonlocal operator $\mathcal{L}_\delta$. And indeed,
in the limit \(\delta\to0\) with the properly normalized kernel (see Section~\ref{sec:limit}), the nonlocal operators converge to the Laplacian and local diffusion, recovering the classical model given by \eqref{eq:localAC}-\eqref{eq:localCH}.

\section{Well-posedness analysis} 
\label{sec:wellposed_analysis}
In this section, we establish well-posedness for the nonlocal KKS phase-field corrosion model \eqref{Eq:ModelNonlocalAC}--\eqref{Eq:ModelNonlocalCH}.  Since in the fully coupled system the nonlocal Allen--Cahn equation \eqref{Eq:ModelNonlocalAC} depends on \(c\) and the Cahn--Hilliard equation \eqref{Eq:ModelNonlocalCH} depends on \(\phi\), we ultimately obtain the well-posedness of a nonlocal coupled Allen--Cahn/Cahn--Hilliard system. We prove the existence, uniqueness, and continuous dependence of weak solutions by constructing Galerkin approximations, deriving uniform energy estimates, and applying compactness arguments to pass to the limits. 

\begin{assumption} \label{assumption}
	We assume:\\[-0.5em]
	\begin{enumerate}[label=(A\arabic*), ref=A\arabic*, leftmargin=.9cm,itemsep=0em]
		\item $\Omega \in \R^d$, $d \in \{1,2,3\}$, bounded Lipschitz domain, $T>0$ is a fixed time;
		\item $c_0,\phi_0 \in L^2$;
		\item $\alpha_\phi,L,M>0$;
		\item $f$ fulfills Assumption~\ref{assump:free_energy_density};
		\item $J_\delta$, $\delta>0$, fulfills Assumption~\ref{assump:kernel};
		\item either $\alpha_c=0$ or $\alpha_c \geq 0$ $\wedge$ ($J_\delta \in L^1$ $\vee$ $c_0 \in V_\delta$).
	\end{enumerate}
\end{assumption}
As we will see in the proof, the value of $\alpha_c$ is delicate. One option is assuming $\alpha_c=0$ which means that the nonlocal effects solely appear due to the phase-field and then influence the concentration through the coupling mechanism. In the case of $\alpha_c>0$, we either require $J_\delta \in L^1$ (which is true for the top-hat but not for the truncated fractional kernel) or $c_0 \in V_\delta$. We begin by stating the definition of a weak solution to \eqref{Eq:ModelNonlocalAC}--\eqref{Eq:ModelNonlocalCH}. 

\begin{definition}\label{Def:Sol}
	A tuple $(\phi,c)$ with the regularity
	\[
	\begin{aligned}
		\phi&\in L^\infty(0,T;L^2(\Omega))
		\cap L^2(0,T;V_\delta),
		\qquad
		\partial_t\phi\in L^2(0,T;V_\delta^{*}), \\
		c&\in L^\infty(0,T;L^2(\Omega))
		\cap L^2(0,T;V_\delta),
		\qquad
		\partial_tc\in L^2(0,T;V_\delta^{*}),
	\end{aligned}
	\]
	is a \emph{weak solution} to \eqref{Eq:ModelNonlocalAC}--\eqref{Eq:ModelNonlocalCH} if for a.e.\ $t\in (0,T)$
	\begin{equation}\label{weakform}
		\begin{aligned}
			\langle\partial_t\phi(t),v\rangle
			&=-L \bigl(\partial_\phi f(\phi(t),c(t)),v\bigr)_{L^2}
			-\alpha_\phi L \bigl(\phi(t),v\bigr)_{V_\delta}
			&&\quad\forall v\in V_\delta,
			\qquad
			\phi(0)=\phi_0, \\
			\langle\partial_tc(t),v\rangle
			&=-M \bigl(\partial_c f(\phi(t),c(t)),v\bigr)_{V_\delta}-\alpha_c M\bigl(\mathcal{L}_\delta c(t),\mathcal{L}_\delta v\bigr)_{L^2}
			&&\quad\forall v\in V_\delta,
			\qquad
			c(0)=c_0.
		\end{aligned}
	\end{equation}
\end{definition}
\ \\
We note that the inner product of the term with $\alpha_c$ appears using the self-adjointness of $\mathcal{L}_\delta$. 
\begin{theorem}
	\label{thm:NLAC}
	Let Assumption~\ref{assumption} hold. Then
	there exists a weak solution
	$(\phi,c)$ of \eqref{Eq:ModelNonlocalAC}--\eqref{Eq:ModelNonlocalCH} in the sense of Definition~\ref{Def:Sol} which further satisfies
	the energy inequality
	\begin{equation} \label{EnergyIneq} \|\phi(t)\|_{L^2}^2+\|c(t)\|_{L^2}^2 + \|\phi\|_{L^2(0,T; V_\delta)}^2+\|c\|_{L^2(0,T;V_\delta)}^2 \leq C(T) \big( \|\phi_0\|_{L^2}^2+\|c_0\|_{L^2}^2 \big) \quad \forall t>0.
	\end{equation}
	In addition, if $J_\delta$ fulfills Assumption~\ref{assump:free_energy_density2} and $(\phi_1,c_1)$ is a strong solution in the sense that $\phi_1,c_1\in L^1(0,T;L^\infty)$ and $(\phi_2,c_2)$ is a weak solution, then the solutions depend continuously on the data, that is, 
	\[
	\|\phi_1(t)-\phi_2(t)\|_{L^{2}}+\|c_1(t)-c_2(t)\|_{L^{2}}
	\lesssim  \|\phi_{1}(0)-\phi_{2}(0)\|_{L^{2}}+\|c_{1}(0)-c_{2}(0)\|_{L^{2}}.
	\]
	Especially, if the initial data is the same for both solutions, then it holds $(\phi_1,c_1)=(\phi_2,c_2)$, which implies weak-strong uniqueness.
\end{theorem}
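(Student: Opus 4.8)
The plan is to prove the three assertions---existence of a weak solution, the energy inequality, and weak-strong uniqueness---in that order, using the standard Galerkin-plus-compactness machinery adapted to the nonlocal function space $V_\delta$. For the \emph{existence} part, I would fix a Galerkin basis $\{w_k\}_{k\in\N}$ of $V_\delta$ (for instance eigenfunctions of $\mathcal{L}_\delta$ plus constants, or simply a countable dense system), project \eqref{weakform} onto $V_n := \mathrm{span}\{w_1,\dots,w_n\}$, and obtain a system of ODEs for the coefficients of $\phi_n,c_n$. Local-in-time solvability follows from the Cauchy--Peano (or Carathéodory) theorem once one checks that the nonlinearities $\partial_\phi f$, $\partial_c f$ are continuous with the growth bounds \eqref{assump:growth}; global-in-time extension then follows from the a priori estimate. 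The key a priori estimate comes from testing the $\phi$-equation with $\phi_n$ and the $c$-equation with $c_n$ (in the $V_\delta$-inner product, consistently with the weak form), summing, and exploiting the semicoercivity \eqref{assump:semi}: the $C_4|c|_{V_\delta}^2$ and $C_6\|\phi\|_{L^4}^4$ terms absorb the bad cross-terms $C_5|\phi|_{V_\delta}^2$ and $C_7\|\phi\|_{L^2}^2$ after a Gronwall argument, while the $\alpha_\phi L|\phi_n|_{V_\delta}^2$ and $\alpha_c M\|\mathcal{L}_\delta c_n\|_{L^2}^2$ terms are nonnegative and can be kept on the good side. This yields uniform bounds $\phi_n,c_n\in L^\infty(0,T;L^2)\cap L^2(0,T;V_\delta)$, exactly the content of \eqref{EnergyIneq}, and then the growth conditions give $\partial_t\phi_n,\partial_t c_n$ bounded in $L^2(0,T;V_\delta^*)$.

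\textbf{Passage to the limit.} With these bounds I would extract a weakly-* convergent subsequence; the compact embedding $V_\delta \Subset L^2(\Omega)$ from \eqref{assump:comp} combined with the Aubin--Lions--Simon lemma gives strong convergence $\phi_n\to\phi$, $c_n\to c$ in $L^2(0,T;L^2)$ and a.e., which is what is needed to pass to the limit in the nonlinear terms $\partial_\phi f(\phi_n,c_n)$, $\partial_c f(\phi_n,c_n)$ (using continuity of $f$'s derivatives plus the growth bounds and Vitali/dominated convergence, noting $\partial_\phi f_2$ has cubic growth so one needs the $L^4$ bound there). The linear nonlocal terms pass to the limit by weak convergence in $L^2(0,T;V_\delta)$. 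Recovering the initial conditions is routine from $\phi_n,c_n\in C([0,T];L^2)$ and the $\partial_t$ bounds. The mass-conservation structure of the $c$-equation and the fact that $\mathcal{L}_\delta$ maps into $\mathring{V}_\delta^*$ should be handled carefully but cause no real difficulty; the $\alpha_c$ case distinction in Assumption~\ref{assumption}(A6) enters only to guarantee $\mathcal{L}_\delta c_n$ is well-defined in $L^2$ when $\alpha_c>0$, i.e.\ so that the term $\|\mathcal{L}_\delta c_n\|_{L^2}^2$ actually appears as a genuine dissipation rather than merely a $V_\delta^*$-pairing.

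\textbf{Weak-strong uniqueness.} For the last assertion, let $(\phi_1,c_1)$ be the strong solution (with $\phi_1,c_1\in L^1(0,T;L^\infty)$) and $(\phi_2,c_2)$ any weak solution; set $\bar\phi = \phi_1-\phi_2$, $\bar c = c_1-c_2$. I would subtract the two weak formulations and test the $\bar\phi$-equation with $\bar\phi$ in $L^2$ and the $\bar c$-equation with $\mathcal{L}_\delta^{-1}\bar c$ (the natural $V_\delta^*$ test function, so that $\langle\partial_t\bar c,\mathcal{L}_\delta^{-1}\bar c\rangle = \frac12\frac{d}{dt}\|\bar c\|_{V_\delta^*}^2$), or alternatively keep both estimates in $L^2$ if the structure permits. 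The key inputs are the enhanced growth \eqref{assump:growthenh} and the joint Lipschitz bound \eqref{assump:lipenh}: the difference $\partial_\phi f(\phi_1,c_1)-\partial_\phi f(\phi_2,c_2)$ is controlled by $(1+|c_1|+|\phi_1|)|\bar\phi| + |\bar c|$, and here the $L^\infty$-regularity of the strong solution is exactly what turns the otherwise-uncontrollable factor $(1+|c_1|+|\phi_1|)$ into an $L^1(0,T)$ coefficient, so that a Gronwall inequality in $\|\bar\phi(t)\|_{L^2}^2 + \|\bar c(t)\|_{L^2}^2$ (or with the $V_\delta^*$-norm on $\bar c$) closes the estimate. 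The nonnegative terms $\alpha_\phi L|\bar\phi|_{V_\delta}^2$ and $\alpha_c M\|\mathcal{L}_\delta\bar c\|_{L^2}^2$ are simply discarded. I expect the \emph{main obstacle} to be precisely the bookkeeping in this uniqueness step---choosing the right test function for the Cahn--Hilliard part so that the nonlocal $H^{-1}$-type norm appears cleanly, and verifying that the $\partial_c f$ difference (which by \eqref{assump:affine} is $A\bar c + h_1(\phi_1)-h_1(\phi_2)$, with $h_1$ Lipschitz) pairs correctly in the $V_\delta$-inner product without a loss of derivatives---together with a careful treatment in the existence proof of the $\alpha_c>0$ dissipation term, which is the only place where the kernel integrability or $c_0\in V_\delta$ hypothesis is genuinely used.
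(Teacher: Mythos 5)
Your overall strategy (spectral Galerkin basis in $V_\delta$, Cauchy--Peano for local solvability, a priori estimates via testing with the solution itself, Aubin--Lions compactness, dominated convergence for the nonlinearities, and Gronwall for weak--strong uniqueness) matches the paper's proof. However, there are two substantive gaps in how you propose to close the key estimates.

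\textbf{(i) The weighting trick is missing, and you discard the term you need.} When you test the $c$-equation with $c_n$ and invoke semicoercivity \eqref{assump:semi}, you pick up a cross-term $C_5 M \,\abs{\phi_n}_{V_\delta}^2$ on the bad side. The \emph{only} term available to absorb this is $\alpha_\phi L \abs{\phi_n}_{V_\delta}^2$ coming from the $\phi$-equation; the $C_4\abs{c_n}_{V_\delta}^2$ term you mention cannot help, since it is in a different variable, and you cannot Gronwall a $V_\delta$-seminorm of $\phi$ against an $L^2$-norm. If $\alpha_\phi L < C_5 M$, simply adding the two identities therefore fails. The paper resolves this by testing the $c$-equation with $Kc_n$ for a small constant $K>0$, producing $C_5 M K\abs{\phi_n}_{V_\delta}^2$ on the right, and then choosing $K$ so that $\alpha_\phi L - C_5 M K > 0$; the crucial point is that the constant in front of $\abs{\phi_n}_{V_\delta}^2$ on the left is $K$-independent while the one on the right scales with $K$. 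The same problem recurs, and is dealt with identically, in the uniqueness step: the $\partial_c f$ difference paired in $(\cdot,\cdot)_{V_\delta}$ against $\bar c$ produces, after Young, a term $\propto K\abs{\bar\phi}_{V_\delta}^2$ that must be absorbed by $\alpha_\phi L \abs{\bar\phi}_{V_\delta}^2$. Your proposal to ``simply discard'' $\alpha_\phi L\abs{\bar\phi}_{V_\delta}^2$ therefore leaves an uncontrolled $\abs{\bar\phi}_{V_\delta}^2$ on the bad side and the Gronwall argument cannot close.

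\textbf{(ii) The claimed $L^2(0,T;V_\delta^*)$ bound on $\partial_t\phi_n$ is too strong under the base assumptions.} Since $\partial_\phi f_2$ has cubic growth \eqref{assump:growth} and $\phi_n$ is only bounded in $L^4(0,T;L^4)$, the nonlinearity is bounded in $L^{4/3}(0,T;L^{4/3})$, so the time-derivative estimate yields $\partial_t\phi_n\in L^{4/3}(0,T;V_\delta^*)$; only under the enhanced growth condition \eqref{assump:growthenh} of Assumption~\ref{assump:free_energy_density2} does one upgrade this to $L^2(0,T;V_\delta^*)$ --- and indeed the paper needs that upgrade precisely so that $\phi$ may be used as a test function in the uniqueness step. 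Luckily $L^{4/3}$ suffices for Aubin--Lions, so the existence part is unaffected, but you should be careful not to conflate the two regularity levels.

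A minor additional caveat: your first suggestion in the uniqueness step, testing the $\bar c$-equation with $\mathcal{L}_\delta^{-1}\bar c$, is not available for the continuous-dependence statement, since $\bar c(t)$ is not mean-free unless $c_1(0)$ and $c_2(0)$ share the same mean; the $L^2$ alternative you mention (weighted by $K$) is what the paper uses.
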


\begin{proof} \emph{Step 1 (Galerkin approximation).}
	Since $\mathcal L_\delta^{-1}:\mathring{L}^2 \to \mathring{L}^2$ is self-adjoint and compact (see \eqref{assump:comp} in Assumption~\ref{assump:kernel}), we invoke the spectral theorem to obtain the existence of an orthonormal basis $(v_k)_{k\ge1}$ in $\mathring{L}^2$, which are further orthogonal in $V_\delta$. For $N\in\N$, we set $V_N:=\operatorname{span}\{v_1,\dots,v_N\}$ and
	$$\phi_{N}(t):=\sum_{i=1}^{N}a_i^{(N)}(t)v_i, \quad c_{N}(t):=\sum_{i=1}^{N}b_i^{(N)}(t)v_i$$ with time-dependent coefficient functions
	$a^{(N)}_i,b^{(N)}_i\in C^{1}$ that shall satisfy the Galerkin system (for any $j \in \{1,\dots,N\}$ and a.e.~$t \in (0,T)$)
	\begin{equation}\label{weakform-galerkin}
		\begin{aligned}
			(\partial_t\phi_N(t),v)_{L^2}
			&=-L \bigl(\partial_\phi f(\phi_N(t),c_N(t)),v\bigr)_{L^2}
			-\alpha_\phi  L \bigl(\phi_N(t),v\bigr)_{V_\delta}
			&&~\forall v\in V_N,
			\quad
			\phi_N(0)=\Pi_{V_N} \phi_0, \\
			(\partial_tc_N(t),v)_{L^2}
			&=-M \bigl(\partial_c f(\phi_N(t),c_N(t)),v\bigr)_{V_\delta} -\alpha_c M\bigl(\mathcal{L}_\delta c_N(t),\mathcal{L}_\delta v\bigr)_{L^2}
			&&~\forall v\in V_N,
			\quad
			c_N(0)=\Pi_{V_N}c_0.
		\end{aligned}
	\end{equation}
	We may rewrite the Galerkin system via the coefficient functions, using the orthonormality of the basis functions, which gives
	\begin{equation}\label{galerkin-ODE2}
		\begin{aligned}
			\ddt a^{(N)}_j(t)
			&= -L\bigl(\partial_\phi f(\phi_N,c_N),v_j\bigr)_{L^2}
			-\alpha_\phi L\bigl(\phi_N,v_j\bigr)_{V_\delta},
			&&\quad 
			a^{(N)}_j(0)=(\phi_0,v_j)_{L^2}, \\
			\ddt b^{(N)}_j(t)
			&= -M\bigl(\partial_c f(\phi_N,c_N),v_j\bigr)_{V_\delta} -\alpha_c M\bigl(\mathcal{L}_\delta c_N,\mathcal{L}_\delta v_j\bigr)_{L^2},
			&&\quad 
			b^{(N)}_j(0)=(c_0,v_j)_{L^2}.
		\end{aligned}
	\end{equation}
	As the involved nonlinearities are continuous in the arguments, we invoke the Cauchy--Peano theorem that gives a continuous solution up to some maximal horizon $T_N\leq T$, that is, $(\phi_N,c_N) \in C([0,T_N];V_N\times V_N)$. \smallskip
	
	\noindent\emph{Step 2 (Energy estimates).}
	Testing \eqref{weakform-galerkin}$_1$ with $v=\phi_N(t)$ yields
	\begin{equation}\label{energy-id}
		\frac12 \ddt \norm{\phi_N}_{L^2}^2
		+\alpha_\phi L \abs{\phi_N}_{V_\delta}^2
		+L(\partial_\phi f(\phi_N,c_N),\phi_N)_{L^2} =0.
	\end{equation}
	Using the splitting $f=f_1+f_2$, see \eqref{assump:splitting}, and the growth conditions \eqref{assump:growth} and semicoercivity \eqref{assump:semi} of $f$, we have with Young's and Hölder's inequality that
	$$\begin{aligned} 
		\bigl|\bigl(\partial_\phi f_1(\phi_N,c_N), \phi_N \bigr)_{L^2}\bigr| &\leq C_1 (1+\abs{c_N}+\abs{\phi_N},\abs{\phi_N})_{L^2} \lesssim 1+\norm{c_N}_{L^2}^2 + \norm{\phi_N}_{L^2}^2, \\
		\bigl(\partial_\phi f_2(\phi_N), \phi_N \bigr)_{L^2} &\geq C_6\norm{\phi_N}_{L^4}^4 - C_7 \norm{\phi_N}_{L^2}^2.
	\end{aligned}$$ 
	We insert this bound back into \eqref{energy-id} to obtain the following:
	\begin{equation} \label{Energy:Est1}
		\frac12 \ddt \norm{\phi_N}_{L^2}^2 + \alpha_\phi L  \abs{\phi_N}_{V_\delta}^2+ LC_6  \norm{\phi_N}_{L^4}^4
		\lesssim 1+ \norm{\phi_N}_{L^2}^2 +  \norm{c_N}_{L^2}^2.
	\end{equation}
	
	On the right-hand side of the inequality is the $L^2$-norm of $c_N$ that we still have to bound. Indeed, testing \eqref{weakform-galerkin}$_2$ with $v=Kc_N(t)$, where $K>0$ will be determined later, it yields the estimate
	$$\frac{K}{2} \ddt\norm{c_N}_{L^2}^2 + \alpha_c KM \norm{\mathcal{L}_\delta c_N}_{L^2}^2   + M K (\partial_c f(\phi_N,c_N),c_N)_{V_\delta}=0.$$
	We make use of the semicoercivity \eqref{assump:semi} as assumed in Assumption~\ref{assump:free_energy_density}, to conclude
	$$\bigl(\partial_c f(\phi_N,c_N), c_N \bigr)_{V_\delta} \geq C_4 \abs{c_N}_{{V_\delta}}^{2} - C_5 \abs{\phi_N}_{{V_\delta}}^2,$$
	and we insert this back into the estimate to deduce
	$$\frac{K}{2} \ddt \norm{c_N}_{L^2}^2 + \alpha_c KM \norm{\mathcal{L}_\delta c_N}_{L^2}^2+ C_4 M K \abs{c_N}_{V_\delta}^2 \leq C_5 M K  \abs{\phi_N}_{{V_\delta}}^2.$$
	We add this estimate to \eqref{Energy:Est1}, which gives in total
	$$\begin{aligned} &\frac12 \ddt \norm{\phi_N}_{L^2}^2 + \alpha_\phi L  \abs{\phi_N}_{V_\delta}^2+ LC_6  \norm{\phi_N}_{L^4}^4+\frac{K}{2} \ddt \norm{c_N}_{L^2}^2  +\alpha_c KM \norm{\mathcal{L}_\delta c_N}_{L^2}^2+ C_4 MK \abs{c_N}_{V_\delta}^2
		\\ &\le C+ C \norm{\phi_N}_{L^2}^2 + C \norm{c_N}_{L^2}^2 + MK C_5 \abs{\phi_N}_{{V_\delta}}^2,
	\end{aligned}$$
	where the generic constant $C$ is independent of $K$.
	At this point, we select $K>0$ sufficiently small to ensure the positivity of $\alpha_\phi L - M K C_5 > 0$.
	Defining $y_N(t)=1+\norm{\phi_N}_{L^2}^2 + K\norm{c_N}_{L^2}^2$ yields 
	\[
	\ddt y_N(t) \leq C y_N(t),
	\]
	and Gronwall's inequality \cite[Lemma II.4.10]{boyer2012mathematical} finally yields
	\begin{equation}\label{L2-Gr}
		\norm{\phi_N(t)}_{L^{2}}^{2}+K\norm{c_N(t)}_{L^{2}}^{2}
		\lesssim \bigl(1+\|\phi_N(0)\|_{L^{2}}^{2}+K\|c_N(0)\|_{L^{2}}^{2}\bigr)
		e^{Ct},
		\qquad
		0\le t<t_N^{\max}.
	\end{equation}
	At this point, we use that the initial data is chosen as the orthogonal projection onto $V_N$, which gives $$\|\phi_N(0)\|_{L^2}=\|\Pi_{V_N} \phi_0 \|_{L^2} \leq \norm{\phi_0}_{L^2},$$
	and in the same manner for $c_N(0)$.
	Hence, bound \eqref{L2-Gr} becomes independent of $N$, thus we may argue by the finite time blow-up theorem for ordinary differential
	equations to conclude $t_N^{\max}\ge T$ and thus the
	Galerkin solution is global on $[0,T]$. Next, we integrate \eqref{Energy:Est1} over $(0,T)$
	and use the estimate \eqref{L2-Gr}, which yields the $N$-uniform bound:
	\begin{equation}
		\label{X-bound}   \int_{0}^{T}\norm{\phi_N(t)}_{L^4}^{4}+\abs{\phi_N(t)}_{V_\delta}^{2}+\abs{c_N(t)}_{V_\delta}^{2} \dt
		\lesssim 1+\norm{\phi_0}_{L^{2}}^{2}+\norm{c_0}_{L^{2}}^{2}.
	\end{equation}
	%
	
	
	\noindent\emph{Step 3 (Optional bound).} We require an additional bound in the case of $\alpha_c>0$ and $J_\delta \notin L^1$ (e.g. the truncated fractional kernel) as we will see in the later steps. In this case, we further have $\mathcal{L}_\delta c_0 \in L^2$ according to Assumption~\ref{assumption}. We note that $\mathcal{L}_\delta^2 c_N$ is a valid test function in \eqref{weakform-galerkin}$_2$ due to the eigenbasis choice, and thus, we obtain
	$$\frac12 \ddt \norm{\mathcal{L}_\delta c_N}_{L^2}^2 + \alpha_c M \norm{\mathcal{L}_\delta^2 c_N}_{L^2}^2 + M(\partial_c f(\phi_N,c_N),\mathcal{L}_\delta^2 c_N)_{L^2}=0.$$
	We apply the growth estimate \eqref{assump:growth} on $\partial_c f(\phi_N,c_N)$ and integrate the inequality in time to obtain with Young's inequality
	$$\frac12 \norm{\mathcal{L}_\delta c_N(t)}_{L^2}^2 + \alpha_c M \norm{\mathcal{L}_\delta^2 c_N}_{L^2(0,t;L^2)}^2 \lesssim 1+\norm{\mathcal{L}_\delta c_0}_{L^2}^2+\norm{c_N}_{L^2(0,t;L^2)}^2+\norm{\phi_N}_{L^2(0,t;L^2)}^2.$$
	The right-hand side is uniformly bounded to the previous energy estimate, which yields 
	\begin{equation}\label{Eq:Optional} \norm{\mathcal{L}_\delta c_N}_{L^\infty(L^2)}^2 +  \norm{\mathcal{L}_\delta^2 c_N}_{L^2(L^2)}^2 \leq C.
	\end{equation}
	
	\noindent\emph{Step 4 (Time derivative bounds).} Lastly, we derive a bound of the time derivative of $\phi_N$ and we proceed as in \cite[Lemma V.1.6]{boyer2012mathematical}. We consider an arbitrary function $\xi \in L^2(0,T;V_\delta)$ and its orthogonal projection is given by $\Pi_{V_N} \xi := \sum_{i=1}^N (\xi,w_i)_{V_\delta} w_i\in V_N$ for a.e.~$t \in (0,T)$. We test \eqref{galerkin-ODE2}$_1$ with $\Pi_{V_N} \xi$, take the sum over $j=1,\dots,N$, take the absolute value and integrate in time to deduce by the Hölder inequality that
	$$\begin{aligned}
		\int_0^T \abs{\langle \partial_t \phi_N, \Pi_{V_N} \xi \rangle} \dt  &= \int_0^T \Big|-\alpha_\phi L (\phi_N,\Pi_{V_N} \xi)_{V_\delta} - L (\partial_\phi f(\phi_N,c),\Pi_{V_N} \xi)_{L^2}\Big| \dt  \\
		&\lesssim  \norm{\phi_N}_{L^2(V_\delta)} \|\Pi_{V_N} \xi\|_{L^2(V_\delta)} +  (1+\abs{c_N}+\abs{\phi_N}^3,|\Pi_{V_N} \xi|)_{L^2(L^2)} \\
		&\lesssim \norm{\phi_N}_{L^2(V_\delta)} \|\Pi_{V_N} \xi\|_{L^2(V_\delta)} + \bigl(1+\norm{c_N}_{L^2(L^2)}\bigr) \|\Pi_{V_N} \xi\|_{L^2(L^2)} +   \norm{\phi_N}_{L^4(L^4)}^3 \|\Pi_{V_N} \xi\|_{L^4(L^4)} 
	\end{aligned}$$
	At this point, we use on the left-hand side that the orthogonal projection is self-adjoint to deduce 
	$$\langle \partial_t \phi_N, \Pi_{V_N} \xi \rangle = \langle \partial_t \Pi_{V_N}^* \phi_N,  \xi \rangle= \langle \partial_t  \phi_N,  \xi \rangle \quad \forall \xi \in L^2(0,T;L^2),$$
	and on the right-hand side that $\|\Pi_{V_N} \xi\|_{V_\delta} \leq \|\xi\|_{V_\delta}$, apply the previous uniform bounds \eqref{L2-Gr}--\eqref{X-bound} and take the supremum over all $\xi \in L^4(0,T;V_\delta)$ with $\|\xi\|_{L^4(V_\delta)} \leq 1$ to obtain the uniform bound
	\begin{equation} \label{Eq:Derivativebound}
		\begin{aligned} \|\partial_t \phi_N\|_{L^{4/3}(V_\delta^*)} &\leq C.
		\end{aligned}
	\end{equation}
	We note that attaining a similar estimate for $\partial_t c_N$ is not as straight-forward due to the term with $\alpha_c$ which is not bounded by the $V_\delta$-norm in the case of the truncated fractional kernel. In the case of $J_\delta \in L^1$ (as for the top-hat kernel), we conclude
	\[
	\abs{\mathcal{L}_\delta v(x)} \leq \int_{\Omega} J_\delta(x-y) \abs{v(x)-v(y)}  dy \leq \left( \int_{\Omega} J_\delta(x-y)  dy \right)^{1/2} \left( \int_{\Omega} J_\delta(x-y) \abs{v(x)-v(y)}^2  dy \right)^{1/2},
	\]
	and it follows $\|\mathcal{L}_\delta \xi\|_{L^2} \lesssim \|\xi\|_{V_\delta}$ for any $\xi \in V_\delta$.
	Using this inequality, we proceed as before to deduce
	$$\begin{aligned}
		\int_0^T \abs{\langle \partial_t c_N, \Pi_{V_N} \xi \rangle} dt &= \int_0^T \Big|- \alpha_c M(\mathcal L_\delta c_N, \mathcal L_\delta \Pi_{V_N} \xi)_{L^2} - M (\partial_c f(\phi_N,c_N),\Pi_{V_N} \xi)_{V_\delta} \Big| \dt  \\
		&\lesssim \norm{\mathcal L_\delta c_N}_{L^2(L^2)}   \norm{\mathcal L_\delta \Pi_{V_N} \xi}_{L^2(L^2)}+(1+\abs{\phi_N}+\abs{c_N},\abs{\mathcal L_\delta\Pi_{V_N} \xi})_{L^2} \\
		&\lesssim  \big(1+\norm{\mathcal L_\delta c_N}_{L^2(L^2)}+\norm{\phi_N}_{L^2(L^2)}+\norm{c_N}_{L^2(L^2)}\big) \| \xi\|_{L^2(V_\delta)}. 
	\end{aligned}$$
	When $J_\delta \notin L^1$, then we can use the optional bound \eqref{Eq:Optional} to conclude
	$$\begin{aligned}
		\int_0^T \abs{\langle \partial_t c_N, \Pi_{V_N} \xi \rangle} dt 
		&\lesssim \norm{\mathcal L_\delta^2 c_N}_{L^2(L^2)}   \norm{\Pi_{V_N} \xi}_{L^2(L^2)}+(1+\abs{\phi_N}+\abs{c_N},\abs{\mathcal L_\delta\Pi_{V_N} \xi})_{L^2} \\
		&\lesssim  \big(1+\norm{\mathcal L_\delta^2 c_N}_{L^2(L^2)}+\norm{\phi_N}_{L^2(L^2)}+\norm{c_N}_{L^2(L^2)}\big) \| \xi\|_{L^2(W_\delta)},
	\end{aligned}$$
	where $W_\delta \subset V_\delta$ is the space of $L^2$-functions $w$ with $\|\mathcal L_\delta w\|_{L^2} < \infty$. In the case of $J_\delta \in L^1$ it holds $W_\delta=V_\delta$. In summary, we obtain
	\begin{equation} \label{Eq:Derivativebound2}\norm{\partial_t c_N}_{L^2(W_\delta^*)} \leq C.\end{equation}
	
	\noindent\emph{Step 5 (Passing to the limits).} By the uniform bounds \eqref{L2-Gr}--\eqref{Eq:Derivativebound}, the Eberlein--\v{S}mulian theorem \cite[Theorem 3.19]{brezis2011functional} guarantees subsequences (not relabelled) and  limit functions $\phi$ and $c$ with the weak/weak-$*$ convergences
	\[
	\begin{alignedat}{3}
		\phi_N&\rightharpoonup^*\phi\text{ in }
		L^{\infty}(0,T;L^2), \quad &&\phi_N\rightharpoonup\phi\text{ in }
		L^{2}(0,T;V_\delta) \cap L^4(0,T;L^4),
		\quad
		&&\partial_t\phi_N\rightharpoonup\partial_t\phi
		\text{ in } L^{4/3}(0,T;V_\delta^*), \\
		c_N&\rightharpoonup^* c\text{ in }
		L^{\infty}(0,T;L^2), \quad &&c_N\rightharpoonup  c\text{ in }
		L^{2}(0,T;V_\delta),
		\quad
		&&\partial_t c_N\rightharpoonup\partial_t c
		\text{ in } L^{2}(0,T;W_\delta^*).  
	\end{alignedat}
	\]
	Furthermore, we have $\alpha_c\mathcal L_\delta c_N\rightharpoonup\alpha_c\mathcal L_\delta c$ in $L^2(0,T;L^2)$ but we stress that we need this convergence result only for treating the convergence of the term with $\alpha_c$ in the variational form and nowhere else, that is, $\alpha_c \geq 0$ (and possibly $\alpha_c=0$) is a valid assumption.
	Moreover, considering the Gelfand triple $V_\delta \Subset L^2 \cong (L^2)^* \subset V_\delta^*$, see \eqref{assump:comp} in Assumption~\ref{assump:kernel}, the Aubin-Lions lemma \cite[Theorem II.5.16]{boyer2012mathematical} states the compact embeddings
	$$L^2(0,T;V_\delta) \cap L^{4/3}(0,T;V_\delta^*) \Subset L^2(0,T;L^2), \quad L^\infty(0,T;L^2) \cap L^{4/3}(0,T;V_\delta^*) \Subset C([0,T];V_\delta^*),$$
	which yield the strong convergences
	\begin{equation} \label{Eq:StrongConv}
		\phi_N\to\phi \text{ in }L^{2}(0,T;L^{2}),
		\quad
		\phi_N\to\phi \text{ in }C([0,T];V_\delta^*).
	\end{equation}
	Similarly, using the Gelfand triple $V_\delta \Subset L^2 \cong (L^2)^* \subset V_\delta^*\subset W_\delta^*$ gives 
	the strong convergences
	\[
	c_N\to c \text{ in }L^{2}(0,T;L^{2}),
	\quad
	c_N\to c \text{ in }C([0,T];V_\delta^*).
	\]
	We multiply the Galerkin system \eqref{weakform-galerkin} by an arbitrary function $\eta \in C_c^\infty(0,T)$, giving
	\begin{equation}\label{weakform-galerkintime}
		\begin{aligned}
			\int_0^T \langle \partial_t\phi_N(t),v\rangle \eta(t) \dt
			+L \int_0^T \left[ \bigl(\partial_\phi f(\phi_N(t),c_N(t)),v\bigr)_{L^2}
			+\alpha_\phi   \bigl(\phi_N(t),v\bigr)_{V_\delta} \right] \eta(t) \dt&=0
			&&~\forall v\in V_N, \\
			\int_0^T \langle \partial_tc_N(t),v\rangle \eta(t) \dt
			+M \int_0^T \left[ \bigl(\partial_c f(\phi_N(t),c_N(t)),v\bigr)_{V_\delta} +\alpha_c \bigl(\mathcal{L}_\delta c_N(t),\mathcal{L}_\delta v\bigr)_{L^2} \right] \eta(t) \dt&=0
			&&~\forall v\in V_N,
		\end{aligned}
	\end{equation}
	Regarding the passage of the limit $N \to \infty$, we can easily pass to the limits in the linear terms using the weak/weak-$*$ convergence results. We solely note that $\alpha_c \bigl(\mathcal{L}_\delta c_N(t),\mathcal{L}_\delta v\bigr)_{L^2}$ converges for $\alpha_c>0$ by using the weak convergence of $\mathcal{L}_\delta c_N$ in $L^2(0,T;L^2)$ and $J_\delta \in L^1$, see \eqref{assumption}, which gives $\norm{\mathcal{L}_\delta v}_{L^2} \lesssim |v|_{V_\delta}$. If $J_\delta \notin L^1$, then we rewrite the term as $\alpha_c \bigl(\mathcal{L}_\delta^2 c_N(t),v\bigr)_{L^2}$ and use the weak convergence of $\mathcal{L}_\delta^2 c_N$ in $L^2(0,T;L^2)$ from the optional estimate \eqref{Eq:Optional}.  Thus, it remains to investigate the terms involving the nonlinear functions $f$.
	\begin{itemize}
		\item We use that $\partial_c f(\phi_N,c_N)$ is of the form $Ac+h_1(\phi)$ with $A>0$ and $h_1 \in C^{1,1}(\R)$, see \eqref{assump:affine}, and that $c_N$ already converges weakly in $L^2(0,T;V_\delta)$. Regarding the convergence involving the nonlinear bounded function $h_1$, we argue by the Lebesgue dominated convergence theorem \cite[Theorem 4.2]{brezis2011functional}. First, we recall the definition of the semi-inner product of $V_\delta$
		\[
		\bigl(h_1(\phi_N), v\bigr)_{V_\delta} = \frac{1}{2} \iint_{\Omega \times \Omega} J_\delta(x-y) \bigl[h_1(\phi_N(x)) - h_1(\phi_N(y))\bigr] [v(x) - v(y)]  \dxy.
		\]
		We define
		\[
		F_N(t, x, y) = J_\delta(x-y) \Bigl\{ \bigl[h_1(\phi_N(t,x)) - h_1(\phi_N(t,y))\bigr] - \bigl[h_1(\phi(t,x)) - h_1(\phi(t,y))\bigr] \Bigr\} [v(x) - v(y)] \eta(t).
		\]
		We want to show that
		\[
		\iiint_{[0,T] \times \Omega \times \Omega} F_N(t, x, y)  \,\textup{d}(t,x,y) \to 0.
		\]
		Since \(\phi_N \to \phi\) almost everywhere in \((0,T) \times \Omega\) and \(h_1\) is continuous, we have
		$h_1(\phi_N(x)) - h_1(\phi_N(y)) \to h_1(\phi(x)) - h_1(\phi(y))$ almost everywhere.
		Hence, \(F_N(t, x, y) \to 0\) almost everywhere.
		Since \(h_1\) is Lipschitz continuous, see \eqref{assump:affine}, it holds
		\[
		\left| \bigl[h_1(\phi_N(x)) - h_1(\phi_N(y))\bigr] - \bigl[h_1(\phi(x)) - h_1(\phi(y))\bigr] \right| \lesssim \abs{\phi_N(x) - \phi_N(y)} + \abs{\phi(x) - \phi(y)}.
		\]
		Therefore,
		\[
		\abs{F_N(t, x, y)} \lesssim J_\delta(x-y) \left( \abs{\phi_N(x) - \phi_N(y)} + \abs{\phi(x) - \phi(y)} \right) \abs{v(x) - v(y)} \abs{\eta(t)}.
		\]
		Now, define the dominating function:
		\[
		G(t, x, y) = C J_\delta(x-y) \left( \abs{\phi_N(x) - \phi_N(y)} + \abs{\phi(x) - \phi(y)} \right) \abs{v(x) - v(y)} \abs{\eta(t)}.
		\]
		We can easily bound the integral of $G$ as follows
		\[
		\iiint_{[0,T] \times \Omega \times \Omega}  G(t, x, y)  \,\textup{d}(t,x,y) \lesssim \|\eta\|_{L^\infty} \int_0^T \left( \abs{\phi_N(t)}_{V_\delta} + \abs{\phi(t)}_{V_\delta} \right) \abs{v}_{V_\delta}  \dt,
		\]
		which implies \(G \in L^1([0,T] \times \Omega \times \Omega)\).
		Since \(|F_N(t, x, y)| \leq G(t, x, y)\) and \(F_N \to 0\) almost everywhere, by the Lebesgue dominated convergence theorem, we conclude:
		\[
		\iiint_{[0,T] \times \Omega \times \Omega} F_N(t, x, y)  \,\textup{d}(t,x,y) \to 0.
		\]
		This implies
		\[
		\int_0^T \bigl(h_1(\phi_N(t)), v\bigr)_{V_\delta} \eta(t)  \dt \to \int_0^T \bigl(h_1(\phi(t)), v\bigr)_{V_\delta} \eta(t)  \dt,
		\]
		and finally
		$$\partial_c f(\phi_N,c_N) \rightharpoonup \partial_c f(\phi,c) \text{ in } L^2(0,T;V_\delta),$$
		
		\item Regarding the convergence of $\partial_\phi f(\phi_N,c_N)$, we argue in a similar manner. According to \eqref{assump:splitting} and \eqref{assump:affine}, it is of the form
		$\partial_\phi f(\phi_N, c_N) = c_N h_2(\phi_N) + h_3(\phi_N) + \partial_\phi f_2(\phi_N)$ and these functions satisfy
		$|h_2(\phi_N)| \leq C$, $|h_3(\phi_N)| \leq C(1 + \abs{\phi_N})$ and $|\partial_\phi f_2(\phi_N)| \lesssim 1+ \abs{\phi_N}^3$.
		It holds $c_N \rightharpoonup c$ in $L^2(0,T; L^2)$ and $h_2(\phi_N) \to h_2(\phi)$ in $L^p(0,T; L^p)$ for any $p < \infty$ by the Lebesgue dominated convergence theorem. 
		Since $h_2(\phi_N) v \eta \to h_2(\phi) v\eta $ strongly in $L^2(0,T; L^2)$ and $c_N \rightharpoonup c$ in $L^2(0,T; L^2)$, it follows that by the weak-strong convergence lemma 
		$$\int_0^T (c_N h_2(\phi_N), v)_{L^2} \eta    \dt \to \int_0^T ( c h_2(\phi),v)_{L^2} \eta   \dt.
		$$
		Next, we observe that it holds $|h_3(\phi_N)+\partial_\phi f_2(\phi_N)| \lesssim 1 + \abs{\phi_N} + \abs{\phi_N}^3$, and $\phi_N$ is bounded in $L^4(0,T; L^4)$, thus $h_3(\phi_N)+\partial_\phi f_2(\phi_N)$ is bounded in $L^{4/3}(0,T; L^{4/3})$. Thus, the Lebesgue dominated convergence theorem gives $h_3(\phi_N)+\partial_\phi f_2(\phi_N) \to h_3(\phi)+\partial_\phi f_2(\phi)$ strongly in $L^{4/3}(0,T; L^{4/3})$.
		Since $v \in V_\delta \subset L^4(\Omega)$ by \eqref{assump:comp} and $\eta \in C_c^\infty(0,T)$, we have $v \eta \in L^4(0,T; L^4)$. Thus, the product $(h_3(\phi_N)+\partial_\phi f_2(\phi_N)) v \eta$ converges in $L^1(0,T; L^1)$.
		Combining the parts, we finally conclude: 
		$$\int_0^T \left( \partial_\phi f(\phi_N(t), c_N(t)), v \right)_{L^2} \eta(t)  \dt \to \int_0^T \left( \partial_\phi f(\phi(t), c(t)), v \right)_{L^2} \eta(t)  \dt.$$
	\end{itemize}
	Thus, passing to the limit in \eqref{weakform-galerkintime} shows that $(\phi,c)$ solves the variational form of the nonlocal Allen--Cahn/Cahn--Hilliard system as stated in Definition~\ref{Def:Sol}. \smallskip
	
	\noindent\emph{Step 6 (Attainment of initial conditions).} The strong convergence \eqref{Eq:StrongConv} evaluated at $t=0$ gives $\phi_N(0) \to \phi(0)$ in $V_\delta^*$ and by the definition of $\phi_N(0)$ we further know that $\phi_N(0) \to \phi_0$ in $L^2$ and the uniqueness of limits yields $\phi(0)=\phi_0$ in $V_\delta^*$. Regarding the initial condition of the concentration, we argue similarly and obtain $c(0)=c_0$ in $V_\delta^*$.\smallskip
	
	\noindent\emph{Step 7 (Energy inequality).} The discrete energy inequality as stated in \eqref{L2-Gr}--\eqref{X-bound} transfers to the continuous level by noting that norm are weakly lower semicontinuous. Thus, we obtain \eqref{EnergyIneq} as stated in Theorem~\ref{thm:NLAC}.   \smallskip
	
	\noindent\emph{Step 8 (Stability and uniqueness).}
	As the last step, we prove the stability and uniqueness of weak-strong solutions. As noted in Theorem~\ref{thm:NLAC}, we additionally assume Assumption~\ref{assump:free_energy_density2}. Using the enhanced growth condition \eqref{assump:growthenh}, we can improve the regularity of $\partial_t \phi_N$ in the proof from before. Indeed, we can bound
	$$\begin{aligned}
		\int_0^T \abs{\langle \partial_t \phi_N, \Pi_{V_N} \xi \rangle} \dt  &= \int_0^T \Big|-\alpha_\phi L (\phi_N,\Pi_{V_N} \xi)_{V_\delta} - L (\partial_\phi f(\phi_N,c),\Pi_{V_N} \xi)_{L^2}\Big| \dt  \\
		&\lesssim  \norm{\phi_N}_{L^2(V_\delta)} \|\Pi_{V_N} \xi\|_{L^2(V_\delta)} +  (1+\abs{c_N}+\abs{\phi_N},|\Pi_{V_N} \xi|)_{L^2(L^2)} \\
		&\lesssim \norm{\phi_N}_{L^2(V_\delta)} \|\Pi_{V_N} \xi\|_{L^2(V_\delta)} + \bigl(1+\norm{c_N}_{L^2(L^2)}+\norm{\phi_N}_{L^2(L^2)}\bigr) \|\Pi_{V_N} \xi\|_{L^2(L^2)} \\
		&\lesssim \|\Pi_{V_N} \xi\|_{L^2(V_\delta)}.
	\end{aligned}$$
	Thus, it yields $\partial_t \phi_N \in L^2(0,T;V_\delta^*)$ and this regularity transfers to $\partial_t \phi$. This is especially important in that the variational form is then satisfied for any test function in $L^2(0,T;V_\delta)$ instead of $L^4(0,T;V_\delta)$ and thus choosing the test function $\phi$ is valid.
	
	Let $(\phi_1,c_1)$ and $(\phi_2,c_2)$ be two weak solution tuples with $c_1 \in L^1(0,T;L^\infty)$ and we set
	$\phi:=\phi_1-\phi_2$, $c:=c_1-c_2$.  
	First, we subtract the two instances of \eqref{weakform} governing the phase-field and then test with $\phi \in L^2(0,T;V_\delta)$:
	\[
	\dfrac12\ddt\norm{\phi}_{L^{2}}^{2}
	+\alpha_\phi L \abs{\phi}_{V_\delta}^2
	=-L \bigl(\partial_\phi f(\phi_1,c_1)
	-\partial_\phi f(\phi_2,c_2),\phi\bigr)_{L^2}.
	\]
	Using the joint Lipschitz continuity of $\partial_\phi f$, see Assumption~\ref{assump:free_energy_density2}, and Hölder’s inequality, we obtain 
	\begin{equation} \label{Eq:CritUni}
		\begin{aligned}
			(\partial_\phi f(\phi_1,c_1)
			-\partial_\phi f(\phi_2,c_2), \phi)_{L^2}  &\lesssim \norm{\phi}^2_{L^2} + (\abs{c_1}+\abs{\phi_1},\abs{\phi}^2)_{L^2}+ \norm{c}^2_{L^2} \\
			&\lesssim \norm{\phi}^2_{L^2} + (\norm{c_1}_{L^\infty}+\norm{\phi_1}_{L^\infty}) \norm{\phi}^2_{L^2}+ \norm{c}^2_{L^2}.
		\end{aligned}
	\end{equation}
	Inserting this into the estimate, we deduce that
	\begin{equation}\label{Eq:UniqueEst1}
		\dfrac12\ddt\norm{\phi}_{L^{2}}^{2}
		+\alpha_\phi L\abs{\phi}_{V_\delta}^2
		\lesssim \norm{\phi}^2_{L^2} + \big(\norm{c_1}_{L^\infty}+\norm{\phi_1}_{L^\infty}\big) \norm{\phi}^2_{L^2}+ \norm{c}^2_{L^2}.
	\end{equation}
	Furthermore, we consider the test function $Kc$, $K>0$ to be chosen, in the subtracted weak forms governing the concentrations $c_1$ and $c_2$, which gives
	$$\frac{K}{2} \ddt \norm{c}_{L^2}^2 +\alpha_c MK \norm{\mathcal{L}_\delta c_N}_{L^2}^2+ MK(\partial_c f(\phi_1,c_1)-\partial_c f(\phi_2,c_2),c)_{V_\delta} = 0.$$
	Using the affine linearity of $\partial_c f(\phi,c)$ with regard to $c$, see \eqref{assump:affine}, it holds
	$$(\partial_c f(\phi_1,c_1)-\partial_c f(\phi_2,c_2),c)_{V_\delta} =A\abs{c}_{V_\delta}^2 + (h_1(\phi_1)-h_1(\phi_2),c)_{V_\delta}.$$
	Inserting this into the estimate, we deduce with the Lipschitz continuity of $h_1$ (with Lipschitz constant $L_h$) and Young's inequality that
	$$
	\begin{aligned}\frac{K}{2} \ddt \norm{c}_{L^2}^2 +\alpha_c MK \norm{\mathcal{L}_\delta c_N}_{L^2}^2+ AMK\abs{c}_{V_\delta}^2 &= MK(h_1(\phi_2)-h_1(\phi_1),c)_{V_\delta} \\ &\leq \frac{MKL_h^2}{2A}\abs{\phi}_{V_\delta}^2 + \frac{AMK}{2} \abs{c}_{V_\delta}^2,
	\end{aligned}$$
	and absorbing the second term on the right-hand side gives
	\begin{equation}\label{Eq:UniqueEst2}
		\dfrac{K}{2}\ddt\norm{c}_{L^{2}}^{2}
		+\alpha_c MK \norm{\mathcal{L}_\delta c_N}_{L^2}^2+\frac{AMK}{2} \abs{c}_{V_\delta}^2
		\leq \frac{MKL_h^2}{2A} \abs{\phi}_{V_\delta}^2.
	\end{equation}	
	We add \eqref{Eq:UniqueEst1} and \eqref{Eq:UniqueEst2} and integrate the summed estimate over $[0,t]$, $t \leq T$, which yields
	$$\begin{aligned}
		&\dfrac12\norm{\phi(t)}_{L^{2}}^{2}
		+\alpha_\phi L  \abs{\phi}_{L^2(0,t;V_\delta)}^2 +\dfrac{K}{2}\norm{c(t)}_{L^{2}}^{2}+\alpha_c MK \norm{\mathcal{L}_\delta c_N}_{L^2(0,t;L^2)}^2
		+\frac{AMK}{2} \abs{c}_{L^2(0,t;V_\delta)}^2
		\\ &\leq \dfrac12\|\phi(0)\|_{L^{2}}^{2}+\dfrac{K}{2}\|c(0)\|_{L^{2}}^{2}+C\norm{\phi}^2_{L^2(0,t;L^2)} + C\int_0^t \big(\|c_1(s)\|_{L^\infty}+\|\phi_1(s)\|_{L^\infty}\big) \|\phi(s)\|^2_{L^2} \ds\\
		&+ C\norm{c}^2_{L^2(0,t;L^2)} + \frac{MKL_h^2}{2A} \abs{\phi}_{L^2(0,t;V_\delta)}^2.
	\end{aligned}$$
	At this point, we choose $K>0$ sufficiently small in the sense
	$\alpha_\phi L>\frac{MKL_h^2}{2A}$ so that the last term on the right-hand side can be absorbed. Dropping the irrelevant terms and defining $y(t)=\norm{\phi(t)}_{L^2}^2 + K \norm{c(t)}_{L^2}^2$, we have
	$$
	y(t) \lesssim y(0)+\int_0^t k(s) y(s)\ds,
	$$
	and $k$ is integrable as we assumed $c_1,\phi_1 \in L^1(0,T;L^\infty)$. Thus, the Gronwall--Bellman inequality \cite[Lemma II.4.10]{boyer2012mathematical} yields $y(t) \lesssim y(0)$ for any $t \geq 0$. This yields the continuous dependence on initial data and choosing $y(0)=0$ gives $y(t)=0$, which means that the solution is unique for the same data.
\end{proof}

\begin{remark}
	We prove weak--strong uniqueness of solutions as stated in Theorem~\ref{thm:NLAC}, an approach that has become popular in the analysis of nonlinear PDEs (including nonlocal Cahn--Hilliard-type systems; see \cite{brunk2023stability,giorgini2019uniqueness,huo2023existence,frigeri2016nonlocal}). Instead, we could have also opted to prove a small-time or small-data uniqueness result. Indeed, the critical estimate was in \eqref{Eq:CritUni}, where we estimated $c_1$ and $\phi_1$ by their $L^\infty$-norm, but we could instead use H\"older's inequality to infer
	$$(\partial_\phi f(\phi_1,c_1)
	-\partial_\phi f(\phi_2,c_2), \phi)_{L^2}  \lesssim \norm{\phi}^2_{L^2} + (\norm{c_1}_{L^2}+\norm{\phi_1}_{L^2}) \norm{\phi}^2_{L^4}+ \norm{c}^2_{L^2}.$$
	Then, employing an embedding of the form $V_\delta \subset L^4$, we could have absorbed the critical term and obtained
	$$\dots + \bigl(\alpha_\phi L - C\norm{c_1}_{L^\infty(L^2)}-C\norm{\phi_1}_{L^\infty(L^2)}\bigr)\norm{\phi}_{L^2(V_\delta)}^2 \leq \dots,$$
	and the bracket becomes nonnegative if we assume either small time or small data since $c_1$ and $\phi_1$ follow the energy inequality, that is, it holds
	$$\norm{c_1}_{L^\infty(L^2)}+\norm{\phi_1}_{L^\infty(L^2)} \lesssim Ce^{CT} \bigl(\|c_1(0)\|_{L^2} +  \|\phi_1(0)\|_{L^2}\bigr).$$
\end{remark}

\section{Nonlocal-to-local limit convergence}
\label{sec:limit}

In this section, we rigorously show -- under suitable smoothness assumptions on the phase-field and the concentration -- that the nonlocal operators in our model converge to their classical local counterparts as the interaction radius $\delta$ tends to zero. We mention the works \cite{abels2024strong,abels2024acflow,elbar2023degenerate,davoli2021nonlocal,trussardi2019nonlocal} for nonlocal-to-local convergence results for phase-field models. Furthermore, the work \cite{pelech2019peridynamic} studied the nonlocal-to-local convergence for a peridynamic model. In what follows, \emph{error} denotes the difference between the nonlocal solution and the local limit as $\delta\to0$. In the following, we assume that $\Omega$ is a $C^3$-domain. 

\subsection{Convergence of the nonlocal operator for the phase-field}
\label{sec:convergence_nonlocal_operator}
Let $\phi$ be sufficiently regular in the sense $\phi(t)\in C^3(\overline\Omega)$ with
$\abs{\phi(t)}_{W^{3,\infty}} < \infty$ a.e. in $(0,T)$.
Furthermore, we assume that $J_\delta$ is the top-hat kernel as discussed in Example~\ref{Ex:Kernels}, fulfilling the properties \eqref{assump:nonneg}--\eqref{assump:2ndnorm}.
Especially, the kernel is second-order normalized, see \eqref{assump:2ndnorm}, and by isotropy it holds
\[
\int_{B_{\delta}(0)} J_{\delta}(z)z_iz_j\dz
= \frac{1}{d}\delta_{ij}\int_{B_{\delta}(0)}J_{\delta}(z)\lvert z\rvert^2\dz
= 2\delta_{ij}.
\]
For each fixed $x\in\Omega$ and $y\in B_{\delta}(x)$, we define $z = y-x$.  A second-order Taylor expansion gives
\[
\phi(x+z)
= \phi(x)
+ \nabla\phi(x)\cdot z
+ \frac12z^TD^2\phi(x)z
+ \frac{1}{6}\sum_{i,j,k} \partial_{i j k}\phi\bigl(x+\theta z\bigr)z_iz_jz_k,
\]
for some $\theta=\theta(x,z)\in(0,1)$.  Hence, we obtain
\[
\phi(x) - \phi(x+z)
= -\nabla\phi(x)\cdot z
- \frac12z^TD^2\phi(x)z
- \frac{1}{6}\sum_{i,j,k} \partial_{i j k}\phi\bigl(x+\theta z\bigr)z_iz_jz_k.
\]
Furthermore, we define the restricted operator
\[
L_\delta\phi(x)=\alpha_\phi\int_{B_\delta(0)}J_\delta(z)\bigl[\phi(x)-\phi(x+z)\bigr]\dz.
\]
\begin{lemma}[Dual-norm convergence of the nonlocal phase operator]\label{lem:dual_norm}
	If $\phi\in H^3(\Omega)$ and $\phi$ is extended by zero outside $\Omega$, then there is a constant $C>0$, independent of $\delta$, such that
	\[
	\bigl\|L_\delta\phi + \alpha_\phi\Delta\phi\bigr\|_{H^{-1}}
	\le C\delta\norm{\phi}_{H^3}.
	\]
\end{lemma}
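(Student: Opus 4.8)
The plan is to estimate the pointwise consistency error $e_\delta(x) := L_\delta\phi(x) + \alpha_\phi\Delta\phi(x)$ via the Taylor expansion already set up above, and then convert a pointwise/$L^2$ bound on $e_\delta$ (and on a suitable "one-derivative-weaker" quantity) into the claimed $H^{-1}$ bound by testing against $H^1$ functions. First I would substitute the second-order Taylor expansion of $\phi(x)-\phi(x+z)$ into the definition of $L_\delta\phi(x)$. The term $-\nabla\phi(x)\cdot z$ integrates to zero against $J_\delta(z)$ by radial symmetry (odd in $z$). The quadratic term $-\tfrac12 z^T D^2\phi(x) z$ integrates, using $\int_{B_\delta(0)} J_\delta(z) z_i z_j \dz = 2\delta_{ij}$ (the isotropy/second-moment identity recalled before the lemma), to $-\alpha_\phi \sum_i \partial_{ii}\phi(x) = -\alpha_\phi \Delta\phi(x)$. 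Hence these two contributions exactly cancel $+\alpha_\phi\Delta\phi(x)$, leaving only the third-order remainder
\[
e_\delta(x) = -\frac{\alpha_\phi}{6}\int_{B_\delta(0)} J_\delta(z)\sum_{i,j,k}\partial_{ijk}\phi(x+\theta z)\, z_i z_j z_k \dz .
\]

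Next I would bound this remainder. Since $\lvert z_i z_j z_k\rvert \le \lvert z\rvert^3 \le \delta^3$ on the support of $J_\delta$, and $\int_{B_\delta(0)} J_\delta(z)\dz = \int_{B_\delta(0)} J_\delta(z)\lvert z\rvert^2\dz \cdot \delta^{-2}\cdot(\text{const}) $ — more precisely, for the top-hat kernel $\int J_\delta = c_\delta\lvert B_\delta\rvert \sim \delta^{-2}$ by the normalization \eqref{Eq:NormalizationJdelta} and \eqref{assump:2ndnorm} — one gets $\int_{B_\delta(0)} J_\delta(z)\lvert z\rvert^3\dz \lesssim \delta$. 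This yields the crude pointwise bound $\lvert e_\delta(x)\rvert \lesssim \delta\, \lvert D^3\phi\rvert_{L^\infty(B_\delta(x))}$, and after integrating in $x$ (and using that $\phi$ is extended by zero, so $B_\delta(x)$ stays in a fixed neighborhood) a bound $\|e_\delta\|_{L^2} \lesssim \delta\|\phi\|_{W^{3,\infty}}$. However, the lemma is stated with $\|\phi\|_{H^3}$ on the right, not $\|\phi\|_{W^{3,\infty}}$, so the clean route is to avoid the $L^\infty$ norm: bound $\lvert e_\delta(x)\rvert \lesssim \delta \fint_{B_\delta(x)} \lvert D^3\phi(y)\rvert \dy$ using $\int J_\delta \lvert z\rvert^3 \lesssim \delta \cdot \int J_\delta \lvert z\rvert^2 / \delta^2 \cdot \delta^2 \sim \delta\int J_\delta\cdot\delta^2$... — the key point is that $J_\delta(z)\lvert z\rvert^3 \le \delta\, J_\delta(z)\lvert z\rvert^2$ and $\int J_\delta(z)\lvert z\rvert^2\dz = 2d$ is an $O(1)$ probability-like weight, so $e_\delta(x)$ is $\delta$ times a weighted average of $\lvert D^3\phi\rvert$ over $B_\delta(x)$; then Young's convolution inequality (or Jensen plus Fubini) gives directly $\|e_\delta\|_{L^2(\Omega)} \lesssim \delta\|D^3\phi\|_{L^2} \le \delta\|\phi\|_{H^3}$. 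Finally, since $L^2(\Omega) \hookrightarrow H^{-1}(\Omega)$ continuously, $\|e_\delta\|_{H^{-1}} \le \|e_\delta\|_{L^2} \lesssim \delta\|\phi\|_{H^3}$, which is the claim.

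The main obstacle — and the reason the statement uses $H^{-1}$ rather than $L^2$ on the left — is a boundary-layer issue that the $L^2$ argument above quietly sidesteps but which must be addressed honestly: when $\phi$ is extended by zero outside $\Omega$, the extension is generally \emph{not} in $H^3(\R^d)$ (the third derivatives pick up surface distributions on $\partial\Omega$ unless $\phi$ and its first two normal derivatives vanish there), so the Taylor remainder is not controlled near $\partial\Omega$ by $\|\phi\|_{H^3(\Omega)}$ alone. The honest fix is to split $\Omega = \Omega_\delta^{\mathrm{int}} \cup \Omega_\delta^{\mathrm{bdry}}$ where $\Omega_\delta^{\mathrm{bdry}} = \{x : \mathrm{dist}(x,\partial\Omega) < \delta\}$ has measure $\lesssim \delta$ (using the $C^3$-regularity of $\partial\Omega$ assumed at the start of Section~\ref{sec:limit}); on the interior part the pointwise Taylor estimate applies verbatim and contributes $O(\delta)\|\phi\|_{H^3}$, while on the thin boundary strip one does \emph{not} expand but instead tests $e_\delta$ against $\psi \in H^1_0(\Omega)$ and integrates by parts to move one derivative off $\phi$: schematically $\int e_\delta\psi = \int (L_\delta\phi)\psi + \alpha_\phi\int\nabla\phi\cdot\nabla\psi$, and the nonlocal bilinear form $\int(L_\delta\phi)\psi$ equals $\alpha_\phi(\phi,\psi)_{V_\delta}$ which is controlled by $\lvert\phi\rvert_{V_\delta}\lvert\psi\rvert_{V_\delta} \lesssim \|\phi\|_{H^1}\|\psi\|_{H^1}$ (using $\mathring V_\delta \to \mathring H^1$, cf. Example~\ref{Ex:Kernels}). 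Combining a first-order consistency estimate for the bilinear form on the strip with the $\lvert\Omega_\delta^{\mathrm{bdry}}\rvert \lesssim \delta$ bound recovers the overall $O(\delta)$ rate. I would present the interior Taylor computation as the main content and handle the boundary strip as a short lemma-internal remark invoking the standard nonlocal-calculus estimates cited in Example~\ref{Ex:Kernels}.
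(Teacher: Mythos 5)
Your core argument coincides with the paper's: Taylor-expand $\phi(x+z)$ to third order, cancel the linear term by radial symmetry and the quadratic term by the second-moment identity \eqref{assump:2ndnorm}, then bound the cubic remainder $e_\delta$ in $L^2$ and use $L^2\hookrightarrow H^{-1}$. The only cosmetic difference is that you estimate $\|e_\delta\|_{L^2}$ directly via $J_\delta(z)|z|^3\le\delta\,J_\delta(z)|z|^2$ and Young/Jensen, whereas the paper pairs $e_\delta$ against $v\in H_0^1$, $\|v\|_{H^1}=1$, and applies Cauchy--Schwarz in $(x,z)$ with the weight split $J_\delta=J_\delta^{1/2}|z|^{-1}\cdot J_\delta^{1/2}|z|$; both use the mass $\int J_\delta|z|^2\dz=2d$, sacrifice one power of $|z|$ for a $\delta$, and use only $\|v\|_{L^2}$ on the test side, so they are the same estimate.

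Your closing paragraph flags a real issue that the paper's proof quietly skips: the bound $\|\mathcal R(\cdot,z)\|_{L^2(\Omega)}\le C\|\phi\|_{H^3(\Omega)}|z|^3$ evaluates $D^3\phi$ at $x+\theta z$, which leaves $\Omega$ when $\operatorname{dist}(x,\partial\Omega)<\delta$, and the zero extension of a generic $H^3(\Omega)$ function is not $H^3(\mathbb R^d)$. However, the fix you sketch does not close the gap. The bilinear-form bound $|\alpha_\phi(\phi,\psi)_{V_\delta}|\lesssim\|\phi\|_{H^1}\|\psi\|_{H^1}$ is $O(1)$, not $O(\delta)$, and the obstruction is in fact more severe than an $O(1)$ integrand on a thin set: with zero extension and the top-hat kernel, $L_\delta\phi(x)$ acquires a contribution $\alpha_\phi c_\delta\,\phi(x)\,|B_\delta(x)\setminus\Omega|\sim\delta^{-2}\phi(x)$ whenever $\phi$ does not vanish near $\partial\Omega$, so $\|L_\delta\phi\|_{L^2(\mathcal N_\delta)}\sim\delta^{-3/2}$, and even after applying Poincar\'e on the strip, $\|\psi\|_{L^2(\mathcal N_\delta)}\lesssim\delta\|\psi\|_{H^1}$, one is left with $O(\delta^{-1/2})$ in $H^{-1}$, not $O(\delta)$. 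The lemma as stated really requires that the zero extension of $\phi$ remain $H^3(\mathbb R^d)$ (equivalently, $\phi$ and its first two normal derivatives vanish on $\partial\Omega$), which the paper assumes implicitly; absent that, one must restrict the estimate to $\Omega_\delta^{\mathrm{int}}$ or add a boundary-layer hypothesis, and your "first-order consistency estimate for the bilinear form on the strip" would need to be made precise before it could be credited.
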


\begin{proof}
	We fix an arbitrary $v\in H_0^1(\Omega)$ with $\norm{v}_{H^1}=1$.  We have to bound
	\[
	\langle L_\delta\phi + \alpha_\phi\Delta\phi,v\rangle
	= \alpha_\phi\int_\Omega\int_{B_\delta(0)}
	J_\delta(z)\bigl[\phi(x)-\phi(x+z)\bigr]v(x)\dz\dx
	+\alpha_\phi\int_\Omega\nabla\phi\cdot\nabla v\dx.
	\]
	We change variables in the double integral and add-subtract the Taylor expansion of $\phi(x+z)$ around $x$ to deduce
	\[
	\phi(x+z)=\phi(x)+\nabla\phi(x)\cdot z+\frac12z^TD^2\phi(x)z
	+\frac16\sum_{|\beta|=3}\partial^\beta\phi(x+\theta z)z^\beta, \quad \theta=\theta(x,z)\in(0,1),
	\]
	where $\beta \in \mathbb{N}_0^d$ is a multindex and we use the standard notations of $z^\beta=z^{\beta_1}\cdot \cdot \cdot z^{\beta_n}$ and $\partial^\beta=\partial^{\beta_1} \dots \partial^{\beta_n}$.
	We substitute this formulation into $L_\delta\phi$ to obtain
	\[
	L_\delta\phi(x)
	=-\alpha_\phi\int_{B_\delta(0)} J_\delta(z)\Bigl[\nabla\phi(x)\cdot z
	+\frac12z^TD^2\phi(x)z
	+\mathcal R(x,z)\Bigr]\dz,
	\]
	where we defined the remainder
	\[
	\mathcal R(x,z)
	=\frac16\sum_{|\beta|=3}\partial^\beta\phi(x+\theta z)z^\beta \quad \theta \in (0,1).
	\]
	By symmetry, we deduce that
	\[
	\int_{B_\delta(0)} J_\delta(z)\nabla\phi(x)\cdot z\dz=0,
	\qquad
	\int_{B_\delta(0)} J_\delta(z)z_i z_j\dz=\frac2d\delta_{ij},
	\]
	from which we conclude
	\[
	L_\delta\phi(x)
	=-\alpha_\phi\Delta\phi(x)
	-
	\alpha_\phi\int_{B_\delta(0)} J_\delta(z)\mathcal R(x,z)\dz.
	\]
	Therefore, we obtain
	\[
	\langle L_\delta\phi + \alpha_\phi\Delta\phi,v\rangle
	= -\alpha_\phi\int_{\Omega}\int_{B_\delta(0)}
	J_\delta(z)\mathcal R(x,z)v(x)\dz\dx.
	\]
	We apply the Cauchy--Schwarz inequality in $(x,z)$ together with the estimate
	\[
	\norm{\mathcal R(\cdot ,z)}_{L^2(\Omega)}\le C\norm{\phi}_{H^3(\Omega)}\abs{z}^3,
	\]
	and the second-order normalization property \eqref{assump:2ndnorm} to get
	\[
	\begin{aligned}
		\abs{\langle L_\delta\phi + \alpha_\phi\Delta\phi,v\rangle}
		&\le\alpha_\phi C 
		\Bigl(\int_{B_\delta(0)} \norm{\mathcal R(\cdot ,z)}_{L^2(\Omega)}^2 \abs{z}^{-2} J_\delta(z)\dz\Bigr)^{1/2}
		\Bigl(\int_\Omega\int_{B_\delta(0)} J_\delta(z)\abs{z}^2|v(x)|^2\dz\dx\Bigr)^{1/2}
		\\
		&\lesssim \norm{\phi}_{H^3} \abs{z}_\infty 
		\Bigl(\int_{B_\delta(0)} \abs{z}^{2} J_\delta(z)\dz\Bigr)^{1/2}
		\Bigl(\int_\Omega\int_{B_\delta(0)} J_\delta(z)\abs{z}^2|v(x)|^2\dz\dx\Bigr)^{1/2}
		\\ &\lesssim \delta\norm{\phi}_{H^3}\norm{v}_{H^1(\Omega)}.
	\end{aligned}
	\]
	Finally, the supremum over $\norm{v}_{H^1}=1$ yields the claimed $H^{-1}$-bound.
\end{proof}

\begin{proposition}[Weak solution convergence]\label{prop:weak_limit}
	Let $(\phi_\delta,c_\delta)$ be the  solutions of the nonlocal KKS model \eqref{Eq:ModelNonlocalAC}--\eqref{Eq:ModelNonlocalCH} with spatial regularity
	\[
	\phi_\delta \in
	C^3(\overline\Omega),\quad g_\delta := \partial_c f(\phi_\delta,c_\delta)
	\in
	C^3(\overline\Omega),
	\qquad
	\sup_{x\in\overline\Omega}\max_{|\alpha|\le3}
	\bigl|\partial^\alpha\phi_\delta(x)\bigr|
	+
	\bigl|\partial^\alpha g_\delta(x)\bigr|
	< \infty.
	\] 
	Then as $\delta\to0$, it yields
	\[
	L_\delta\phi_\delta
	\to
	-\alpha_\phi\Delta\phi
	\quad\text{in }L^2(0,T;H^{-1}),
	\quad
	\mathcal{L}_\delta \partial_c f (\phi_\delta,c_\delta)
	\to
	\Delta \partial_c f (\phi,c)
	\quad\text{in }L^2(0,T;H^{-1}).
	\]
	Consequently, any weak-limit point $(\phi,c)$ satisfies the classical local KKS system in the $H^{-1}$-weak sense.
\end{proposition}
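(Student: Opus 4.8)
The plan is to split the statement into the two quantitative operator limits and the subsequent identification of the limiting system. The first part rests on \cref{lem:dual_norm} combined with $\delta$-uniform a priori bounds; the second is a compactness argument closely paralleling Step~5 of the proof of \cref{thm:NLAC}. Throughout I would first extract, by Banach--Alaoglu and Eberlein--\v{S}mulian, a subsequence along which all convergences below hold, with limit $(\phi,c)$.

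\emph{Operator convergence.} For a.e.\ $t\in(0,T)$ I would apply \cref{lem:dual_norm} to $\phi_\delta(t)$ (extended by zero as there), giving $\|L_\delta\phi_\delta(t)+\alpha_\phi\Delta\phi_\delta(t)\|_{H^{-1}}\le C\delta\|\phi_\delta(t)\|_{H^3}$ with $C$ independent of $\delta$ and $t$; the same Taylor-expansion computation, with the constant $\alpha_\phi$ replaced by $1$, applied to the smooth function $g_\delta:=\partial_c f(\phi_\delta,c_\delta)$ gives $\|\mathcal L_\delta g_\delta(t)+\Delta g_\delta(t)\|_{H^{-1}}\le C\delta\|g_\delta(t)\|_{H^3}$. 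Squaring, integrating over $(0,T)$, and invoking the assumed (uniform-in-$\delta$) $W^{3,\infty}$-control of $\phi_\delta$ and $g_\delta$ yields
\[
\|L_\delta\phi_\delta+\alpha_\phi\Delta\phi_\delta\|_{L^2(0,T;H^{-1})}+\|\mathcal L_\delta g_\delta+\Delta g_\delta\|_{L^2(0,T;H^{-1})}\le C\delta\longrightarrow0 .
\]
It then remains to show $\Delta\phi_\delta\to\Delta\phi$ and $\Delta g_\delta\to\Delta g$ strongly in $L^2(0,T;H^{-1})$, with $g:=\partial_c f(\phi,c)$. From the $W^{3,\infty}$-bounds, $(\phi_\delta)$ and $(g_\delta)$ are bounded in $L^\infty(0,T;H^3)$ and, through $c_\delta=A^{-1}(g_\delta-h_1(\phi_\delta))$ with $h_1\in C^{1,1}$, $(c_\delta)$ is bounded in $L^\infty(0,T;H^2)$. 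Using the equations \eqref{Eq:ModelNonlocalAC}--\eqref{Eq:ModelNonlocalCH}, the growth condition \eqref{assump:growth}, the $H^{-1}$-bound on $\mathcal L_\delta\phi_\delta$ just obtained, the $\delta$-uniform embedding $H^1\hookrightarrow V_\delta$ (valid for the top-hat kernel precisely because of the second-moment normalization \eqref{assump:2ndnorm}, hence $V_\delta^*\hookrightarrow H^{-1}$ $\delta$-uniformly), and the time-derivative estimates from Step~4 of the proof of \cref{thm:NLAC}, I would derive $\delta$-uniform bounds on $\partial_t\phi_\delta$ and $\partial_t c_\delta$ in a fixed negative-order space. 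Aubin--Lions (or Arzel\`a--Ascoli) applied to $H^3\Subset H^2$ and $H^2\Subset H^1$ then gives $\phi_\delta\to\phi$ in $L^2(0,T;H^2)$ and $c_\delta\to c$ in $L^2(0,T;H^1)$, whence $g_\delta=Ac_\delta+h_1(\phi_\delta)\to Ac+h_1(\phi)=g$ in $L^2(0,T;H^1)$ (since $h_1'$ is bounded and continuous). In particular $\Delta\phi_\delta\to\Delta\phi$ and $\Delta g_\delta\to\Delta g$ strongly in $L^2(0,T;H^{-1})$, and combining this with the quantitative estimate above yields the two displayed convergences.

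\emph{Identification of the limit system.} I would integrate the weak form \eqref{weakform} against an arbitrary $\eta\in C_c^\infty(0,T)$ and a test function $v\in C_0^\infty(\Omega)$, rewrite $\alpha_\phi L(\phi_\delta,v)_{V_\delta}=L\langle L_\delta\phi_\delta,v\rangle$ and $M(\partial_c f(\phi_\delta,c_\delta),v)_{V_\delta}=M\langle\mathcal L_\delta g_\delta,v\rangle$ (for fixed $v$ the boundary-layer mismatch between the domain-restricted operator and the $\R^d$-integral arising after zero-extension is $o(1)$), and pass to the limit term by term: the $\partial_t$-terms by weak convergence of $\partial_t\phi_\delta,\partial_t c_\delta$; the two nonlocal terms by the operator convergence, producing the gradient-pairings $\alpha_\phi L(\nabla\phi,\nabla v)_{L^2}$ and $M(\nabla\partial_c f(\phi,c),\nabla v)_{L^2}$; the nonlinearity $(\partial_\phi f(\phi_\delta,c_\delta),v)_{L^2}$ exactly as in Step~5 of the proof of \cref{thm:NLAC}, using \eqref{assump:splitting}--\eqref{assump:affine}, the growth bound \eqref{assump:growth}, and the a.e.\ and strong $L^2$-convergence of $\phi_\delta,c_\delta$; and the $\alpha_c$-term either absent ($\alpha_c=0$, as allowed by \cref{assumption}) or converging to $\alpha_c M(\Delta c,\Delta v)_{L^2}$ via the strong convergence $\mathcal L_\delta v\to-\Delta v$ in $L^2$ together with the weak convergence $\mathcal L_\delta c_\delta\rightharpoonup-\Delta c$ in $L^2$ (a consequence of the self-adjointness of $\mathcal L_\delta$, of $\mathcal L_\delta v\to-\Delta v$, and of $c_\delta\rightharpoonup c$). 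This is exactly the weak ($H^{-1}$-)form of \eqref{eq:localAC}--\eqref{eq:localCH}, and the initial data are recovered from the strong convergence in $C([0,T];V_\delta^*)$ as in Step~6 of the proof of \cref{thm:NLAC}.

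\emph{Main obstacle.} The crux is securing, uniformly in $\delta$, the bounds needed to upgrade the quantitative $H^{-1}$-estimate of \cref{lem:dual_norm} into strong $L^2(0,T;H^{-1})$-convergence of $\Delta\phi_\delta$ and $\Delta g_\delta$ --- above all the $\delta$-uniform time-derivative control and the $\delta$-uniform embedding $H^1\hookrightarrow V_\delta$, which is exactly where the second-moment normalization \eqref{assump:2ndnorm} and the restriction to the top-hat kernel enter. Secondary technical points are the boundary layer relating $L_\delta$ and $\mathcal L_\delta$ after the zero-extension in \cref{lem:dual_norm}, and the biharmonic-type term when $\alpha_c>0$, which forces one either to set $\alpha_c=0$ or to test against smoother functions.
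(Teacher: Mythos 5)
Your proposal is correct, but at the central step it takes a genuinely heavier route than the paper's. Where you aim to prove \emph{strong} convergence $L_\delta\phi_\delta \to -\alpha_\phi\Delta\phi$ in $L^2(0,T;H^{-1})$ by upgrading $\Delta\phi_\delta\rightharpoonup\Delta\phi$ to strong convergence via Aubin--Lions, a $\delta$-uniform time-derivative bound, and the $\delta$-uniform embedding $H^1\hookrightarrow V_\delta$ afforded by the second-moment normalization \eqref{assump:2ndnorm}, the paper's proof contents itself with \emph{weak} convergence obtained from the elementary decomposition
\[
\langle L_\delta\phi_\delta,\psi\rangle \;=\; -\alpha_\phi\langle\Delta\phi_\delta,\psi\rangle \;+\; \langle L_\delta\phi_\delta+\alpha_\phi\Delta\phi_\delta,\psi\rangle,
\qquad \psi\in L^2(0,T;H_0^1),
\]
in which the second term tends to zero strongly by \cref{lem:dual_norm} and the first converges because $\phi_\delta\rightharpoonup\phi$ weakly in $L^2(0,T;H^3)$ (hence $\Delta\phi_\delta\rightharpoonup\Delta\phi$ weakly in $L^2(0,T;H^1)\hookrightarrow L^2(0,T;H^{-1})$); nothing beyond Banach--Alaoglu is used. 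Your extra compactness apparatus buys you genuine strong $H^{-1}$-convergence and, for the limit identification, the a.e.\ and strong $L^2$ convergences needed to repeat Step~5 of \cref{thm:NLAC} verbatim for the nonlinear terms---a point the paper treats in one sentence (``Passing to limits\ldots yields the classical local weak formulation''), so your more detailed passage, including the observations about the $\alpha_c$-biharmonic term (requiring $\alpha_c=0$ or smoother test functions) and the boundary-layer mismatch between the restricted operator $L_\delta$ and $\mathcal L_\delta$ after zero-extension, correctly fills in what the paper leaves implicit. In short: both arguments are sound, the paper's is lighter and delivers weak $H^{-1}$-convergence, yours is heavier and delivers strong; the price you pay is exactly the $\delta$-uniform embedding $V_\delta^*\hookrightarrow H^{-1}$ and the $\delta$-uniform time-derivative control that the paper never needs to invoke.
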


\begin{proof}
	By Lemma~\ref{lem:dual_norm}, for each \(t\),
	\[
	\|L_\delta\phi_\delta(t)+\alpha_\phi\Delta\phi_\delta(t)\|_{H^{-1}} \le C\delta\|\phi_\delta(t)\|_{H^3}.
	\]
	The uniform \(H^3\)-bounds and integration in time yield:
	\[
	L_\delta\phi_\delta + \alpha_\phi\Delta\phi_\delta \to 0 \quad \text{in } L^2(0,T;H^{-1}).
	\]
	Since \(\phi_\delta \rightharpoonup \phi\) weakly in \(L^2(0,T; H^3)\), we have \(\Delta \phi_\delta \rightharpoonup \Delta \phi\) weakly in \(L^2(0,T; H^1)\). Hence, for any test function \(\psi \in L^2(0,T; H^1_0)\):
	\[
	\langle L_\delta \phi_\delta, \psi \rangle = - \alpha_\phi \langle \Delta \phi_\delta, \psi \rangle + \langle L_\delta \phi_\delta + \alpha_\phi \Delta \phi_\delta, \psi \rangle.
	\]
	The second term tends to 0 by the convergence above, and the first term converges to \(-\alpha_\phi \langle \Delta \phi, \psi \rangle\) by weak convergence. Thus,
	\[
	L_\delta\phi_\delta \to -\alpha_\phi\Delta\phi \quad \text{in } L^2(0,T;H^{-1}).
	\]
	The identical argument applies to \(\mathcal{L}_\delta \partial_c f(\phi_\delta,c_\delta)\). Passing to limits in the nonlocal weak form yields the classical local weak formulation.
\end{proof}
\ \\
Consequently, the nonlocal Allen--Cahn equation
\[
\partial_t \phi_\delta
= -L\Bigl\{ \partial_\phi f(\phi,c)
+ \alpha_{\phi}\int_{B_{\delta}(x)}J_{\delta}(x-y)[\phi(x)-\phi(y)]\dy
\Bigr\}
\]
converges, as $\delta\to0$, to the classical local Allen--Cahn equation
\[
\frac{\partial \phi}{\partial t}
= -L\Bigl\{ \partial_\phi f(\phi,c) 
- \alpha_{\phi}\Delta\phi \Bigr\},
\]
with error $\mathcal{O}(\delta)$ in $L^\infty(\Omega)$.

\subsection{Convergence of the nonlocal operator for the concentration evolution}
\label{sec:convergence_nonlocal_concentration}

Next, we consider the nonlocal concentration evolution,
\[
\partial_t c
= M \int_{B_{\delta}(x)} J_{\delta}(x-y)\Bigl[
g(y) - g(x)\Bigr]\dy,
\quad
g(x) := \partial_c f\bigl(\phi(x),c(x)\bigr).
\]
We assume $g\in C^3(\overline{\Omega})$ with $|g|_{W^{3,\infty}}<\infty$ and
for each fixed $x$ and $y\in B_{\delta}(x)$, we define $z=y-x$. Again, using Taylor's extension gives
\[
g(x+z)
= g(x)
+ \nabla g(x)\cdot z
+ \frac12z^TD^2 g(x)z
+ \frac{1}{6}\sum_{i,j,k} \partial_{i j k}g\bigl(x+\theta z\bigr)z_iz_jz_k, \quad \theta=\theta(x,z)\in(0,1).
\]
Then
\[
g(x+z)-g(x)
= \nabla g(x)\cdot z
+ \frac12z^TD^2 g(x)z
+ \frac{1}{6}\sum_{i,j,k} \partial_{i j k}g\bigl(x+\theta z\bigr)z_iz_jz_k.
\]
Since $J_{\delta}$ is radial, the linear term integrates to zero and the cubic-order term also vanishes by symmetry.  Therefore
\[
\int_{B_{\delta}(0)} J_{\delta}(z)[g(x+z)-g(x)]\dz
= \frac12 \int_{B_{\delta}(0)} J_{\delta}(z)z^TD^2 g(x)z \dz
+ R_{\delta}^g(x),
\]
where the exact error is
\[
R_{\delta}^g(x)
= \frac{1}{6}\int_{B_{\delta}(0)}J_{\delta}(z)
\sum_{i,j,k}\partial_{i j k}g\bigl(x+\theta z\bigr)z_iz_jz_k \dz,
\]
and
\[
\bigl|R_{\delta}^g(x)\bigr|
\le \frac{1}{6}\Bigl(\sup_{|\alpha|=3}\|\partial^{\alpha}g\|_{L^{\infty}}\Bigr)
\int_{B_{\delta}(0)}J_{\delta}(z)\lvert z\rvert^3\dz
= \mathcal{O}(\delta).
\]
By the same isotropy argument, we deduce
\[
\int_{B_{\delta}(0)} J_{\delta}(z)z^TD^2 g(x)z \dz
= \sum_{i,j} \partial_{i j} g(x)\int_{B_{\delta}(0)}J_{\delta}(z)z_iz_j\dz
= 2\Delta g(x).
\]
Hence, we conclude
\[
\int_{B_{\delta}(x)} J_{\delta}(x-y)\bigl[g(y)-g(x)\bigr]\dy
= \Delta g(x) + \widetilde R_{\delta}(x),
\quad
\|\widetilde R_{\delta}\|_{L^{\infty}(\Omega)} = \mathcal{O}(\delta).
\]
It follows that
\[
\frac{\partial c}{\partial t}(x,t)
= M\Delta g(x) + \mathcal{O}(\delta)
=
M\Delta\Bigl[\partial_c f(\phi,c)\Bigr] + \mathcal{O}(\delta),
\]
in $L^\infty(\Omega)$. 
Equivalently, as $\delta\to0$,
\[
\frac{\partial c}{\partial t}(x,t)
\longrightarrow
\nabla\bigl(M\nabla[\partial_c f(\phi,c)]\bigr)(x,t),
\]
with error $\mathcal{O}(\delta)$ in $L^\infty$.  In particular, no extra $\delta^{-2}$ rescaling is needed once the kernel is normalized as above.
\begin{remark}[Weak solution convergence]
	Although we assumed $C^3$-regularity for pointwise $\mathcal O(\delta)$ error, one can show for general weak solutions that
	\[
	L_\delta\phi \to -\alpha_\phi\Delta\phi
	\quad\text{and}\quad
	\mathcal{L}_\delta\mu \to \Delta\mu
	\quad\text{in }H^{-1}(\Omega)
	\]
	as \(\delta\to0\).  Consequently, the nonlocal gradient-flow system converges in the weak $H^{-1}$-sense to the classical KKS equations without requiring $C^3$ hypotheses.
\end{remark}

\begin{remark}[Sharp-interface limit and peridynamic diffusion] \label{sec:structural_equivalence} 
	We want to comment on the scenario when one fixes the macroscopic interaction horizon
	$\delta>0$ and lets the diffuse-interface width
	$\varepsilon\in(0,\delta/2)$ tend to zero. A rigorous justification of this formal limit for related nonlocal phase-field models can be found in \cite{burkovska2021nonlocal,seleson2013interface} and for local phase-field model in \cite{noldner2023sharp}.  The $\eps$-dependence is introduced only through the
	double-well part of the KKS free energy density as discussed in Example~\ref{Ex:KKSdensity},
	\[
	f_\varepsilon(\phi,c)
	:= A\bigl[c-h(\phi)(1-c_L)-c_L\bigr]^{2}
	+\frac{\omega}{\varepsilon}g(\phi).
	\]
	All limits below are taken with \(\varepsilon\to0\) at \emph{fixed}
	\(\delta\). The $\Gamma$-convergence and PDE limits are therefore taken as
	$\varepsilon\to0$ with the macroscopic horizon $\delta>0$ kept
	fixed; $\delta$ enters only through the operator $\mathcal L_\delta$ that
	appears in the peridynamic law. We also assume well-prepared initial data: there is a smooth open set
	$\Omega_{\mathrm m}^{0}\Subset\Omega$ such that
	\[
	\|\phi_\varepsilon(\cdot,0)-\chi_{\Omega_{\mathrm m}^{0}}\|_{C^{1}(\Omega)}
	\le C\varepsilon,
	\qquad
	\|c_\varepsilon(\cdot,0)\|_{H^{1}(\Omega)}\le C,
	\]
	for a constant $C$ independent of $\varepsilon$.
	Finally, we impose an interface-boundary separation
	$\operatorname{dist}(\Gamma_\varepsilon(t),\partial\Omega)\ge
	d_0>0$ (uniform in $t$) to avoid horizon truncation; see
	Remark~\ref{rem:boundary-layer} below.
	Under these hypotheses, the nonlocal Allen--Cahn solution
	$\phi_\varepsilon$ converges in $L^{1}(\Omega)$, up to a subsequence, to
	the characteristic function $\chi_{\Omega_{\mathrm m}(t)}$ of an
	evolving metal domain.  Then one proceeds to show $\phi_\varepsilon\to\chi_{\Omega_{\mathrm m}(t)}$ in $L^{1}(\Omega)$,
	while the $\varepsilon$-independent peridynamic term is
	retained in the limit. Formally let
	\[
	\mu_\varepsilon
	:=\partial_c f\bigl(\phi_\varepsilon,c_\varepsilon\bigr)
	=2A\Bigl[c_\varepsilon-c_L-(1-c_L)h(\phi_\varepsilon)\Bigr]
	=:2A\bigl[c_\varepsilon-c_L\bigr]+\vartheta_\varepsilon,
	\qquad
	\vartheta_\varepsilon:=-2A(1-c_L)h(\phi_\varepsilon).
	\]
	
	\medskip
	\noindent
	Because \(h(\phi_\varepsilon)\to h(\phi_0)\in\{0,1\}\) a.e., the limit
	\(
	\vartheta_0(x,t)\in\{-2A(1-c_L),0\}
	\)
	is piecewise constant.  The horizon operator annihilates constants, so
	\[
	\mathcal L_\delta u(x)
	:=\int_{\Omega}J_{\delta}(x-y)[u(x)-u(y)]\dy,
	\quad \mathcal L_\delta[\vartheta_0]=0,\quad\text{inside each pure phase}.
	\] 
	Indeed, for any constant $k$,
	$\mathcal L_\delta k(x)=\int_{B_\delta(x)} J_\delta(x-y)[k-k]\dy=0$.
	Hence $\mathcal L_\delta[\vartheta_0]$ is supported only in the
	$\delta$-strip 
	\[
	\mathcal N_\delta(t)
	:=\bigl\{x\in\Omega:
	\operatorname{dist}\bigl(x,\partial\Omega_{\mathrm m}(t)\bigr)
	<\delta\bigr\}.
	\]
	Since it holds \(\mathcal L_\delta[\vartheta_0](x)=O(\delta^{-1})\) on
	\(\mathcal N_\delta(t)\) and \(|\mathcal N_\delta(t)|=O(\delta)\), one can show $
	\|\mathcal L_\delta[\vartheta_0]\|_{L^{2}(\Omega)}
	= O(\delta^{-1/2}).
	$
	If we formulate the concentration equation on the
	\emph{bulk domain}  
	\(\displaystyle\Omega_\delta(t):=\Omega\setminus\mathcal N_\delta(t)\),
	in fact
	\(\mathcal L_\delta[\vartheta_0](x,t)\equiv0\) on
	\(\Omega_\delta(t)\), so we obtain the peridynamic diffusion law
	\begin{equation}
		\partial_t c
		= 2AM\mathcal L_\delta[c]
		\qquad\text{in } \Omega_\delta(t),
		\label{eq:pd-final}
	\end{equation}
	supplemented with interface conditions that follow from the sharp-interface
	limit of the phase-field. Testing the nonlocal Cahn--Hilliard equation with a smooth $\psi=\psi(x,t)$, passing to the limit $\varepsilon\to0$, and using the
	time-derivative bound established in Section~\ref{sec:wellposed_analysis}, we arrive formally at the peridynamic diffusion law \eqref{eq:pd-final} displayed above. The formal chain of arguments in this section can be summarised as
	\[
	\text{nonlocal KKS }(\varepsilon,\delta)
	\xRightarrow[\varepsilon\to0]{\text{sharp  interface}}
	\text{peridynamic diffusion }(\delta).
	\]
	In peridynamic diffusion, including corrosion and heat-conduction models
	\cite{bobaru2012heat,zeleke2021review}, one writes
	$$\partial_t C=\int_{B_\delta(x)}\lambda(x,y){\cal J}(x-y)[C(y)-C(x)]\dy,$$
	with a unit-mass influence function ${\cal J}$ and fitted diffusivities
	$\lambda_{\mathrm m}$ (metal) and $\lambda_{\mathrm e}$ (electrolyte).
	Choosing ${\cal J}=J_\delta$ makes (\ref{eq:pd-final}) identical to that
	format provided one sets
	\[
	\lambda_{\mathrm m}=2A(1-c_L),
	\qquad
	\lambda_{\mathrm e}=2Ac_L.
	\]
	Thus all material parameters are inherited from the bulk free energy of
	the phase-field model; the horizon $\delta$ is the only new
	peridynamic-specific length.
\end{remark}

\begin{remark}[Boundary layer]\label{rem:boundary-layer}
	If \(\Gamma_\varepsilon\) hits \(\partial\Omega\) so that
	\(\operatorname{dist}(\Gamma_\varepsilon,\partial\Omega)=0\), the part of
	the horizon \(B_\delta(x)\) lying outside \(\Omega\) produces an extra
	\(H^{-1}\) term bounded by \(C\delta\).  Because \(\delta\) is fixed in
	this section, that contribution is uniformly bounded and can be absorbed
	into the \(O(\varepsilon)\) error of the sharp-interface limit.
\end{remark}


\section{Numerical solution}\label{sec:discretization_and_experiments}

There are several candidates for discretization schemes used with truncated nonlocal operators \cite{delia2020numerical}. While meshfree methods (such as PeriPy) currently dominate practical applications of nonlocal models in mechanics \cite{boys2021peripy,parks2011peridynamics}, both finite element and finite difference methods are widely applied.
As in the local case, finite difference schemes generally require a higher regularity of the solution \cite{tian2013analysis}, while finite element methods offer greater flexibility and are well suited for complex geometries. Beyond these familiar differences, the nonlocal framework introduces additional nuances such as the computation of double integrals, which increases their computational cost. Nonetheless, the nonlocal models often exhibit stabilizing properties in contrast to their local counterpart, as it was also observed in the discretization of a nonlocal Cahn--Hilliard model in \cite{brunk2024analysis}. Furthermore, we mention the works \cite{martowicz2019solving,zhu2025numerical} and the references therein for numerical methods for nonlocal PDEs.

In this work, we consider the explicit Euler time discretization of the KKS phase-field corrosion model and its meshfree approximation of nonlocal integrals via a one-point Gaussian quadrature (peridynamic-style) on a uniform grid. We apply this scheme to a two-dimensional pitting corrosion benchmark in a square domain and numerically demonstrate that, as the nonlocal horizon $\delta\to0$, the nonlocal solution converges in the $L^{2}$-norm to the corresponding local (classical) solution.  

\subsection{Numerical setup}

We discretize the (nondimensional) domain $\Omega = [0,50]\times[0,50]$ using a uniform Cartesian grid with spacing $h=\Delta x=\Delta y$. A circular pit of radius $R=2$ is centered at $(25,25)$; inside the pit we enforce $\phi=0$, $c=0$, while on the remainder of the boundary we impose homogeneous Neumann conditions for both $\phi$ and $c$ (no-flux and zero normal derivative). We treat the pit region as a Dirichlet-layer removal of metal (constant boundary), and the outer boundary as free (Neumann).  

At each grid point $(x_{i},y_{j})$ with $i,j=0,1,\ldots,N$ (where $N=50/h$), we denote by $\phi_{i,j}^{n}$ and $c_{i,j}^{n}$ the phase and concentration at time $t^{n}=n\Delta t$. The local evolution equations (Allen--Cahn for $\phi$, diffusion for $c$) are discretized by forward Euler:
\[
\begin{aligned}
	\phi_{i,j}^{n+1}
	&= \phi_{i,j}^{n} 
	- \Delta t L \Bigl\{\frac{\partial f}{\partial \phi}(\phi_{i,j}^{n},c_{i,j}^{n}) 
	- \alpha_{\phi}\Delta_{\text{FD}}\phi\bigl|_{i,j}^{n}\Bigr\},\\
	c_{i,j}^{n+1}
	&= c_{i,j}^{n}
	+ \Delta t M \Delta_{\text{FD}}\Bigl[\frac{\partial f}{\partial c}(\phi,c)\Bigr]\bigl|_{i,j}^{n},
\end{aligned}
\]
where $\Delta_{\text{FD}}$ is the standard five-point finite-difference Laplacian with homogeneous Neumann (zero-normal-derivative) boundary conditions, clamped to enforce $\phi=0$, $c=0$ inside the pit.  The nonlocal counterpart replaces each Laplacian by a mesh-free sum over neighbors within the horizon $\delta$:
\[
\begin{aligned}
	\phi_{i,j}^{n+1}
	&= \phi_{i,j}^{n} 
	- \Delta t L \Bigl\{\frac{\partial f}{\partial \phi}(\phi_{i,j}^{n},c_{i,j}^{n}) 
	+ \alpha_{\phi}
	\sum_{(k,l)\in\mathcal{N}_{\delta}(i,j)}
	K\bigl[\phi_{i,j}^{n}-\phi_{k,l}^{n}\bigr]h^{2}\Bigr\},\\
	c_{i,j}^{n+1}
	&= c_{i,j}^{n}
	+ \Delta t M \Bigl\{
	\sum_{(k,l)\in\mathcal{N}_{\delta}(i,j)}
	K\Bigl[\frac{\partial f}{\partial c}(\phi_{k,l}^{n},c_{k,l}^{n})
	- \frac{\partial f}{\partial c}(\phi_{i,j}^{n},c_{i,j}^{n})\Bigr]h^{2}
	\Bigr\},
\end{aligned}
\]
where $\mathcal{N}_{\delta}(i,j)=\{(k,l):\|(k-i,k-j)h\|\le\delta\}$, and $K =\frac{8}{\pi\delta^{4}}$ is the proportionality constant for a top-hat kernel in two dimensions. We work entirely with nondimensional variables, using
\[
x'=\frac{x}{L_{c}},\quad t'=\frac{t}{T_{c}},\quad
c'=\frac{c}{C_{c}},\quad\phi'=\phi,
\]
and choose $L_{c}=50$, $T_{c}=1$, $C_{c}=1$.  Table~\ref{tab:params_updated} lists the parameter values used in our numerical simulations.  

\begin{table}[ht]
	\centering
	\caption{Nondimensional parameters in the numerical examples.}
	\label{tab:params_updated}
	\begin{tabular}{l c}
		\hline
		Parameter & Value \\
		\hline
		Kinetic coefficient $L$             & 0.23529   \\
		Free-energy coefficient $A$        & 25.7211   \\
		Equilibrium concentration $c_{L}$   & 0.0357    \\
		Double-well coefficient $\omega$    & 1.0       \\
		\hline
	\end{tabular} \qquad
	\begin{tabular}{l c}
		\hline
		Parameter & Value \\
		\hline
		Nonlocal coefficient $\alpha_{\phi}$& $7.2115\times10^{-7}$ \\
		Mobility $M$                       & 1.945     \\
		Time step $\Delta t$    & $10^{-4}$ \\
		Pit radius $R$                     & 2         \\
		\hline
	\end{tabular}
\end{table}
\ \\
To determine a numerically stable forward–Euler time increment, we use a linear (Fourier/von Neumann) stability bound for the diffusive operators. For the local Laplacian in two space dimensions the classical Courant–Friedrichs–Lewy (CFL) restriction is
\begin{equation}
	\Delta t \;\le\; \frac{h^{2}}{4\,D_{\mathrm{lin}}},
\end{equation}
where the effective diffusivity is taken from the linear parts of the updates,
\[
D_\phi:=L\,\alpha_\phi,
\qquad
D_c:=2A\,M,
\qquad
D_{\mathrm{lin}}:=\max\{D_\phi,D_c\}.
\]
For the nonlocal (peridynamic-type) diffusion operator with horizon $\delta = m h$, a von Neumann analysis yields the less restrictive bound
\begin{equation}
	\Delta t \;\le\; \alpha(m)\,\frac{h^{2}}{D_{\mathrm{lin}}}, 
	\qquad
	\alpha(m)\;=\;\frac{m^{2}}{4\,H_{m}}, 
	\qquad
	H_{m}=\sum_{j=1}^{m}\frac{1}{j},
\end{equation}
which reduces to the classical CFL at $m=1$ and increases monotonically with $m$ \cite{mossaiby2025efficient}.
Initially, we set
\[
\phi(x,y,0)=
\begin{cases}
	0, & \|(x-25,y-25)\|\le2,\\
	1, & \text{otherwise},
\end{cases}
\quad
c(x,y,0)=
\begin{cases}
	0, & \|(x-25,y-25)\|\le2,\\
	1, & \text{otherwise},
\end{cases}
\]
and enforce $\phi=0$, $c=0$ inside the pit at each time step.  On the outer boundary $\partial\Omega$, we impose $\frac{\partial\phi}{\partial n}=\frac{\partial c}{\partial n}=0$. 

\subsection{Numerical simulations}

To illustrate convergence of the nonlocal FD scheme to its local counterpart, we fix $\delta=3h$ and run four pairs of simulations with $h=2,1,0.5,0.25$ (so that $\delta=6,3,1.5,0.75$).  At the final time $t=15$, we extract the solution vectors $\phi_{\text{FD-local}}$ and $\phi_{\text{FD-nonlocal}}$ on each grid and compute the relative $L^{2}$-error
\[
\text{rel}L^{2}\text{-error} 
= 100 \times 
\frac{\|\phi_{\text{FD-local}} - \phi_{\text{FD-nonlocal}}\|_{2}}
{\|\phi_{\text{FD-local}}\|_{2}}\%.
\]
Figure~\ref{fig:conv_L2} plots this percentage error versus $\delta$ on a log-log scale, demonstrating approximately first-order decay in $\delta$.  
\begin{figure}[ht]
	\centering
	\includegraphics[width=0.5\textwidth]{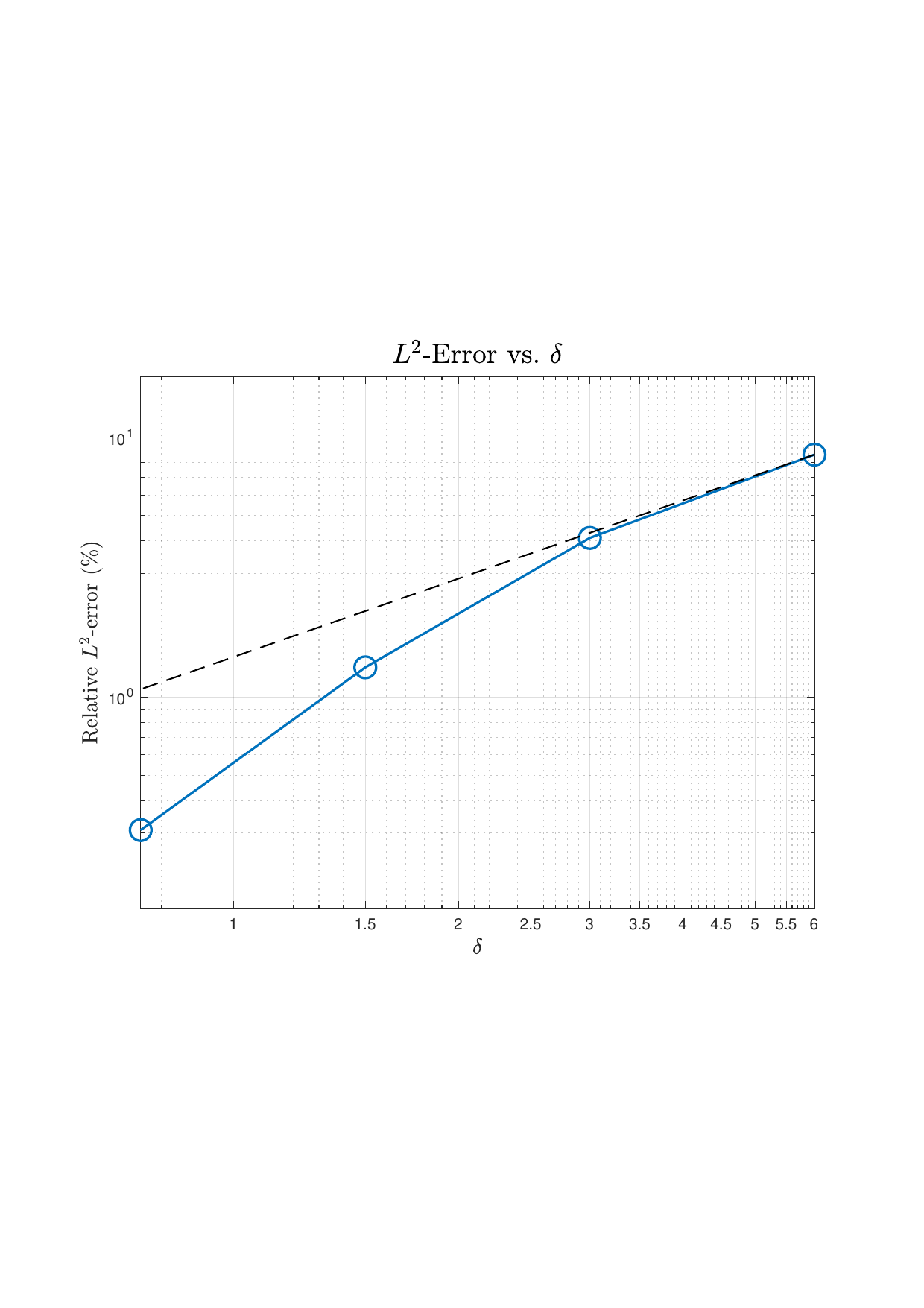}
	\caption{Log-log plot of relative $L^{2}$-error (\%) between local FD and nonlocal FD solutions at $t=15$, versus nonlocal horizon $\delta=3h$ for $h=2,1,0.5,0.25$.  A reference $O(\delta)$ line (dashed) is shown for comparison.}
	\label{fig:conv_L2}
\end{figure}
Finally, we also display the terminal-time phase and concentration fields side by side, to visualize how $\phi$ and $c$ vary with $\delta$ and $h$.  In the first set of panels (Figure~\ref{fig:endpoint_m}), we fix $h=0.25$ and plot $(\phi,c)$ at $t=15$ for $m=2,4,6$ (so $\delta=0.5,1.0,1.5$).  In the second set (Figure~\ref{fig:endpoint_delta}), we fix $\delta=0.75$ and plot $(\phi,c)$ at $t=15$ for $h=0.25,0.5,1.0$ (so $m=3,1.5,0.75$).  
\begin{figure}[htbp!]
	\resizebox{.9\textwidth}{!}{
		\begin{tabular}{cM{.28\textwidth}M{.28\textwidth}M{.28\textwidth}M{.04\textwidth}}
			&$m=2$&$m=4$&$m=6$ \\
			$\phi$ &\includegraphics[height=.22\textheight,clip,trim={35 35 50 25}]{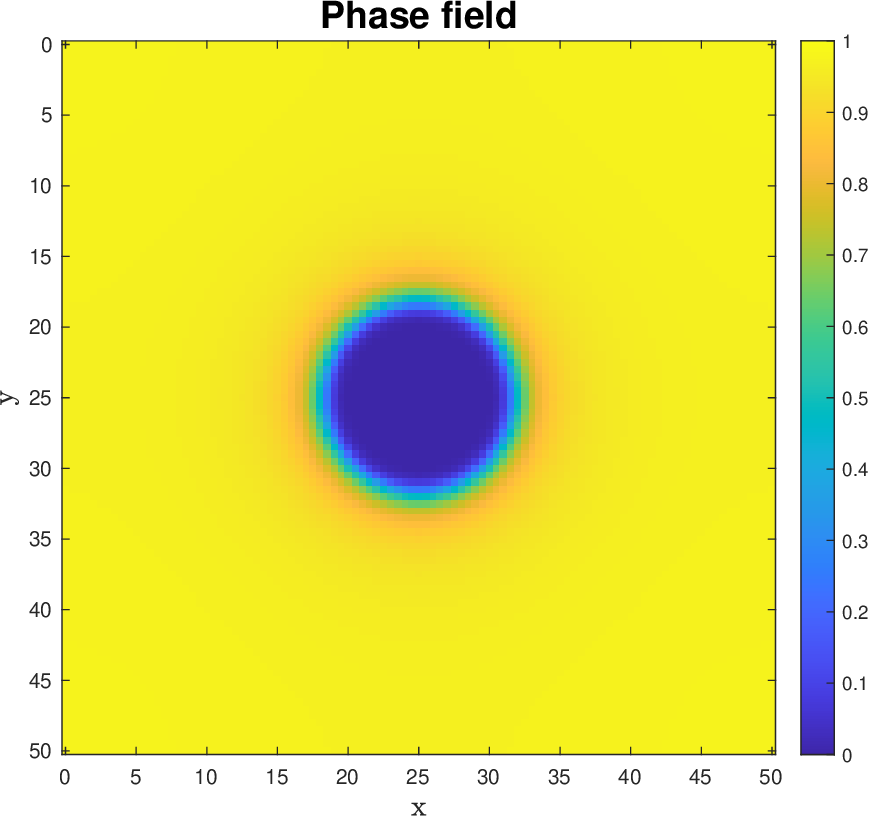} 
			&\includegraphics[height=.22\textheight,clip,trim={35 35 50 25}]{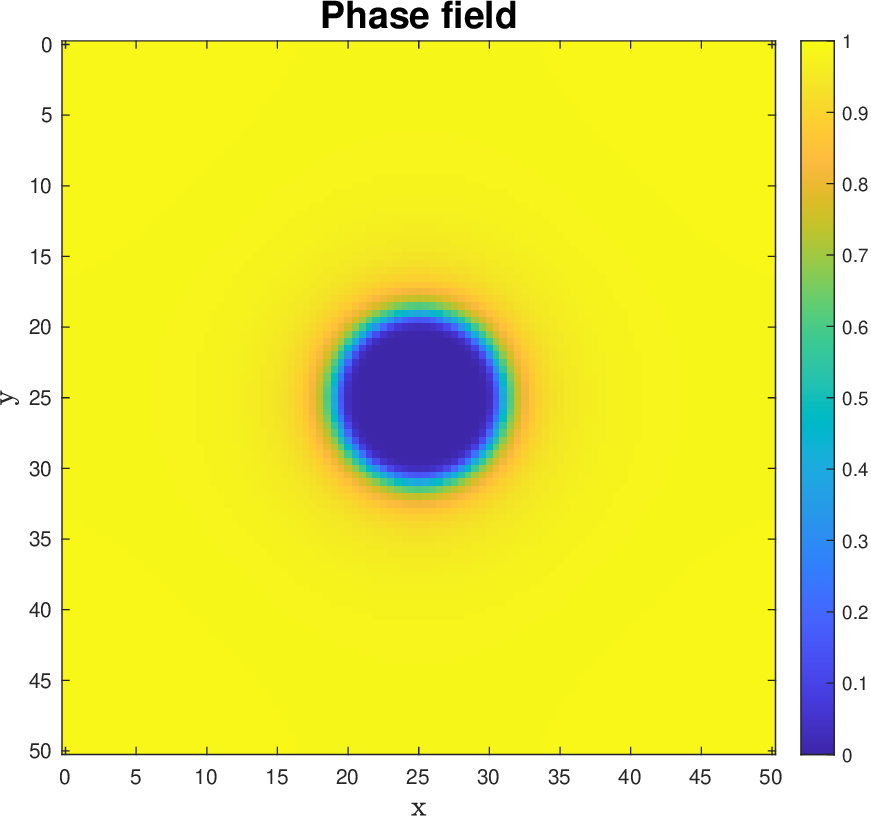}
			&\includegraphics[height=.22\textheight,clip,trim={35 35 50 25}]{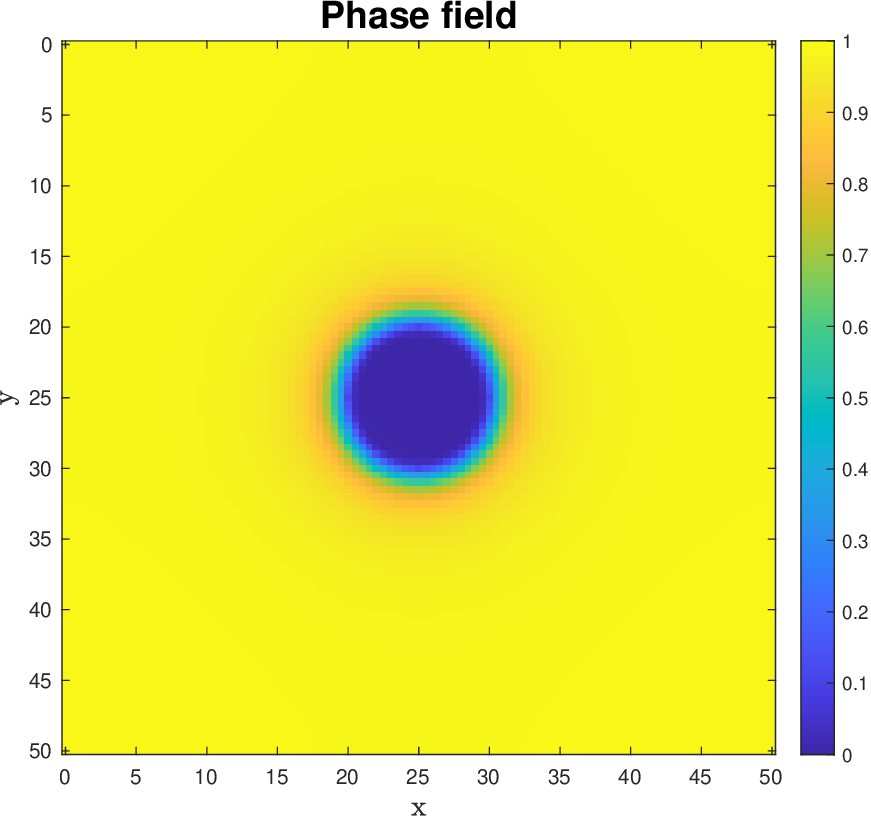} 
			&\includegraphics[height=.22\textheight,width=.7cm]{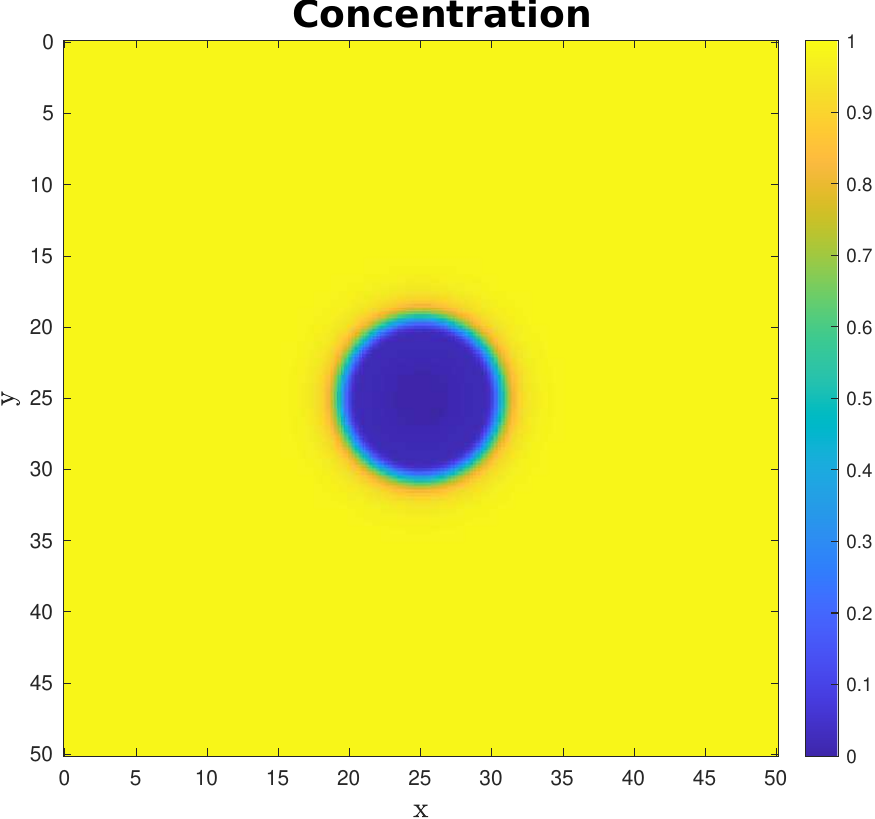}
			\\
			$c$&\includegraphics[height=.22\textheight,clip,trim={35 35 50 25}]{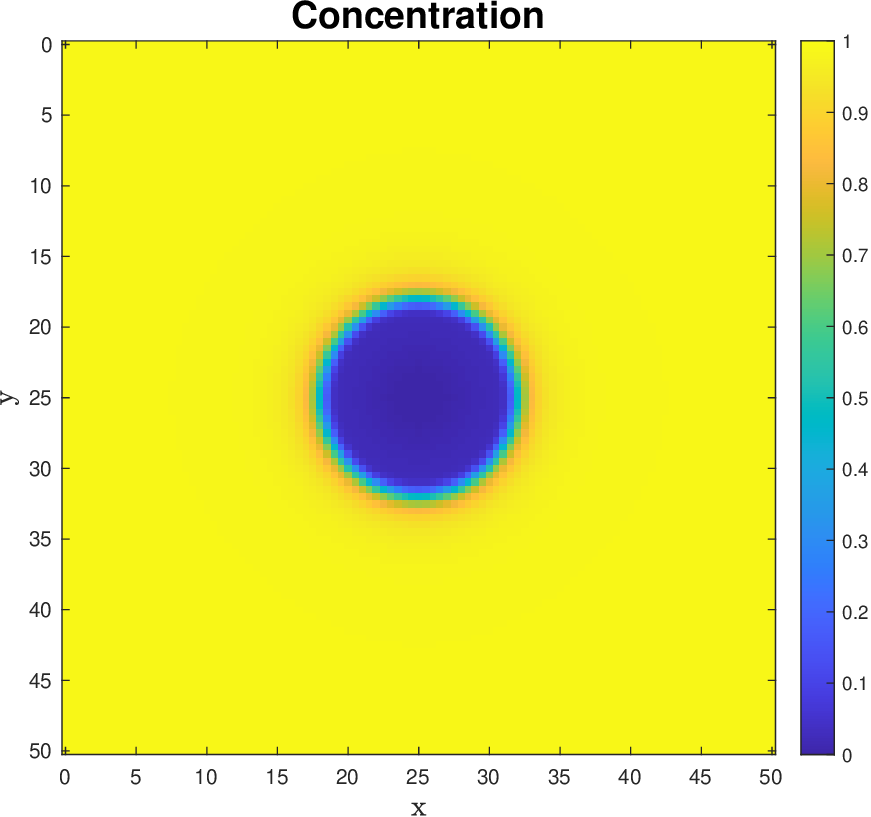} 
			&\includegraphics[height=.22\textheight,clip,trim={35 35 50 25}]{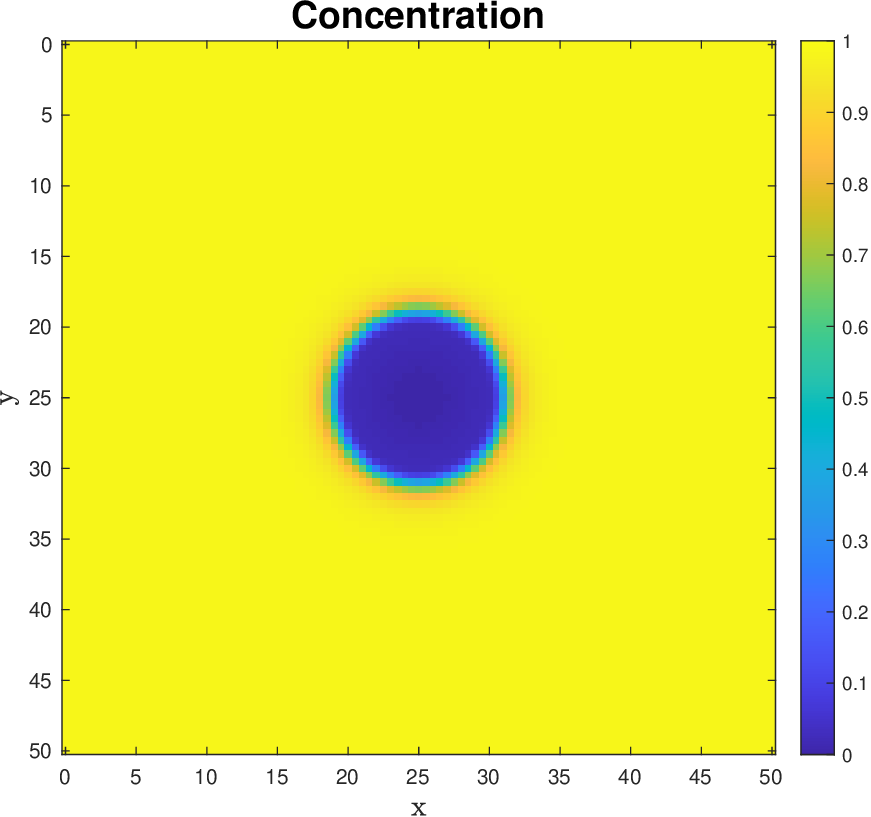}
			&\includegraphics[height=.22\textheight,clip,trim={35 35 50 25}]{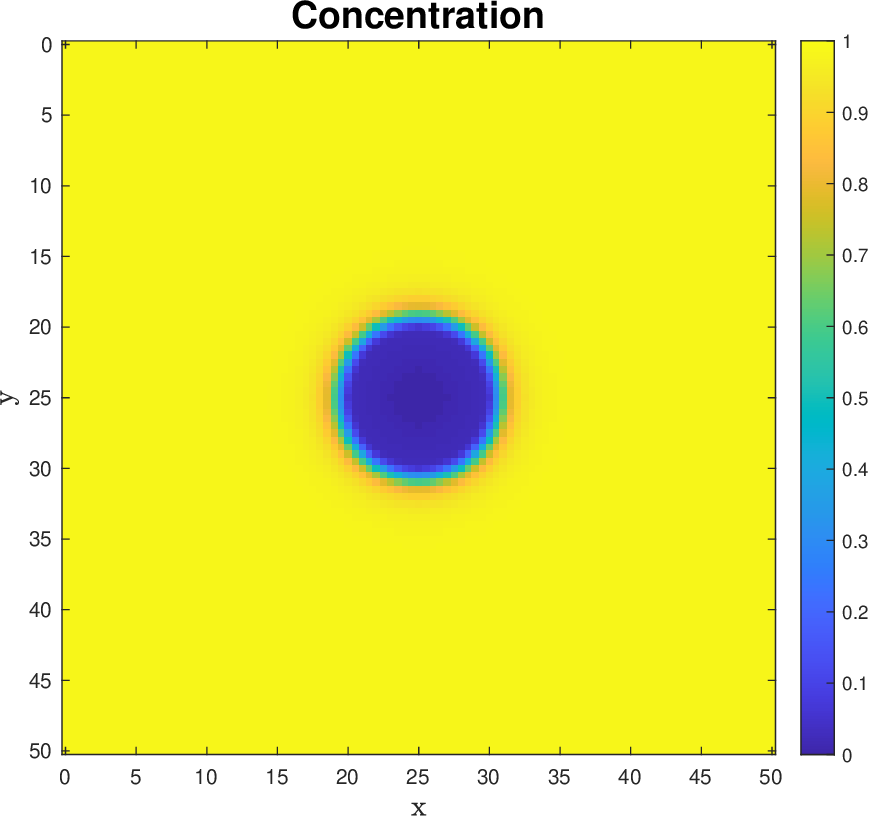} &\includegraphics[height=.22\textheight,width=.7cm]{color.pdf}
	\end{tabular} }
	\caption{Terminal-time ($t=15$) snapshots of $\phi$ (left) and $c$ (right) for $h=0.5$ with $m=\delta / h = 2,4,6$ .}
	\label{fig:endpoint_m}
\end{figure}
\begin{figure}[htbp!]
	\resizebox{.9\textwidth}{!}{
		\begin{tabular}{cM{.28\textwidth}M{.28\textwidth}M{.28\textwidth}M{.04\textwidth}}
			&$h=1$&$h=0.5$&$h=0.25$ \\
			$\phi$ &\includegraphics[height=.22\textheight,clip,trim={35 35 50 25}]{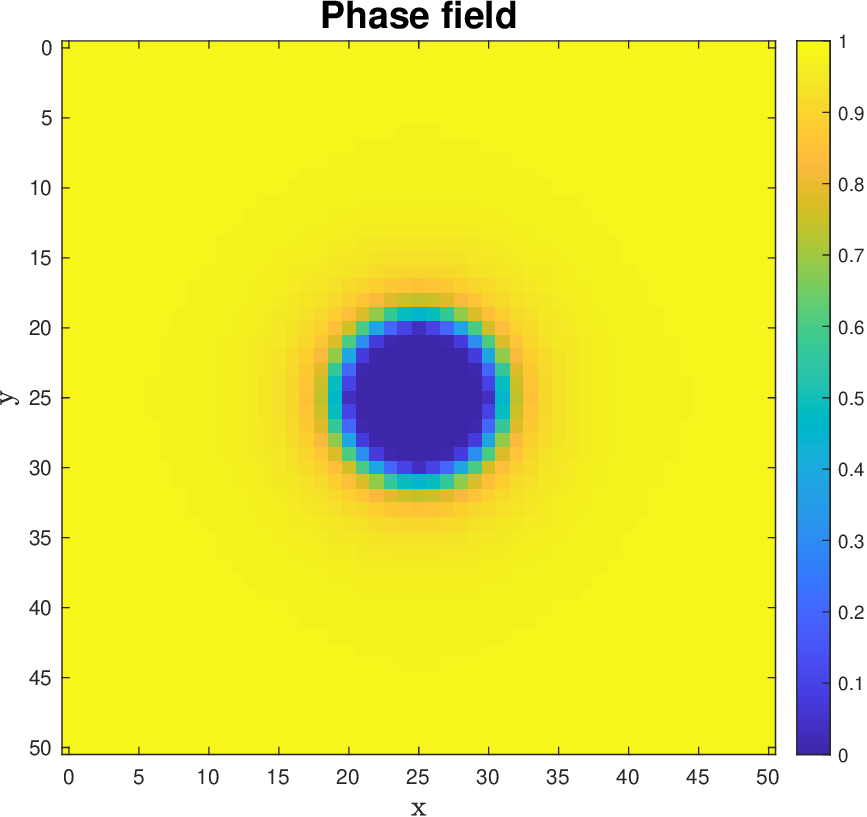} 
			&\includegraphics[height=.22\textheight,clip,trim={35 35 50 25}]{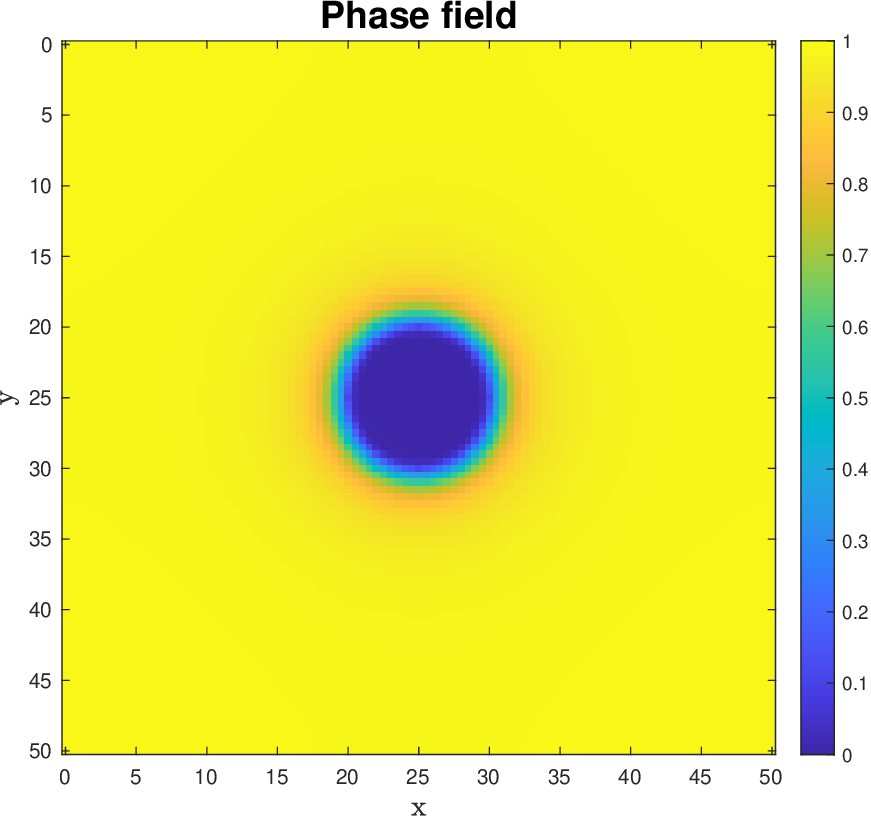}
			&\includegraphics[height=.22\textheight,clip,trim={35 35 50 25}]{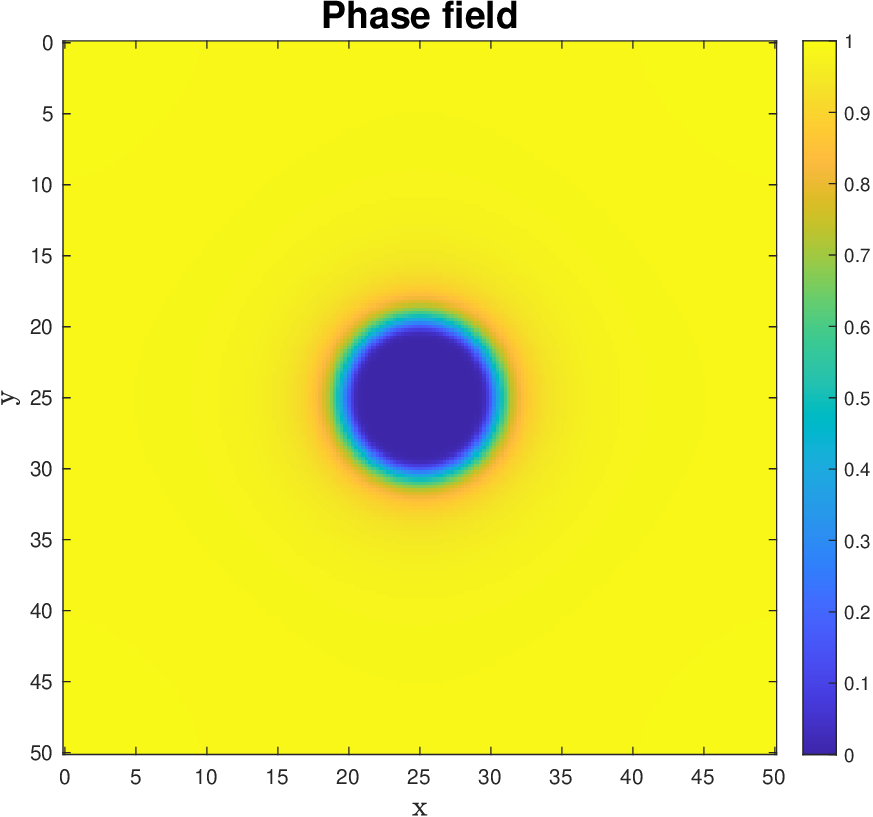} &\includegraphics[height=.22\textheight,width=.7cm]{color.pdf}\\
			$c$&\includegraphics[height=.22\textheight,clip,trim={35 35 50 25}]{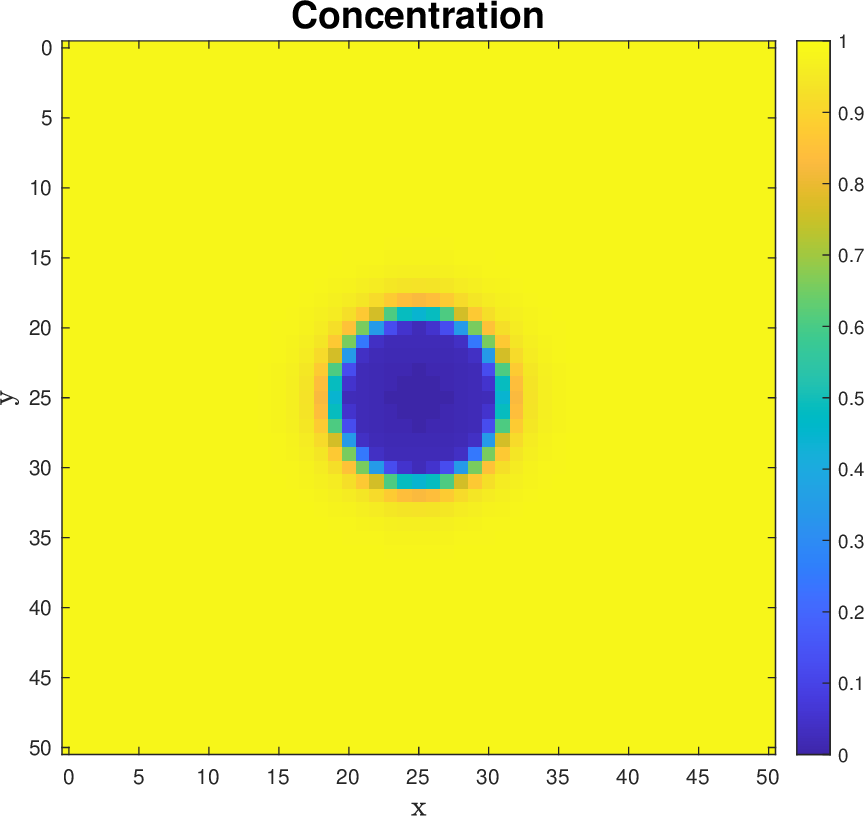} 
			&\includegraphics[height=.22\textheight,clip,trim={35 35 50 25}]{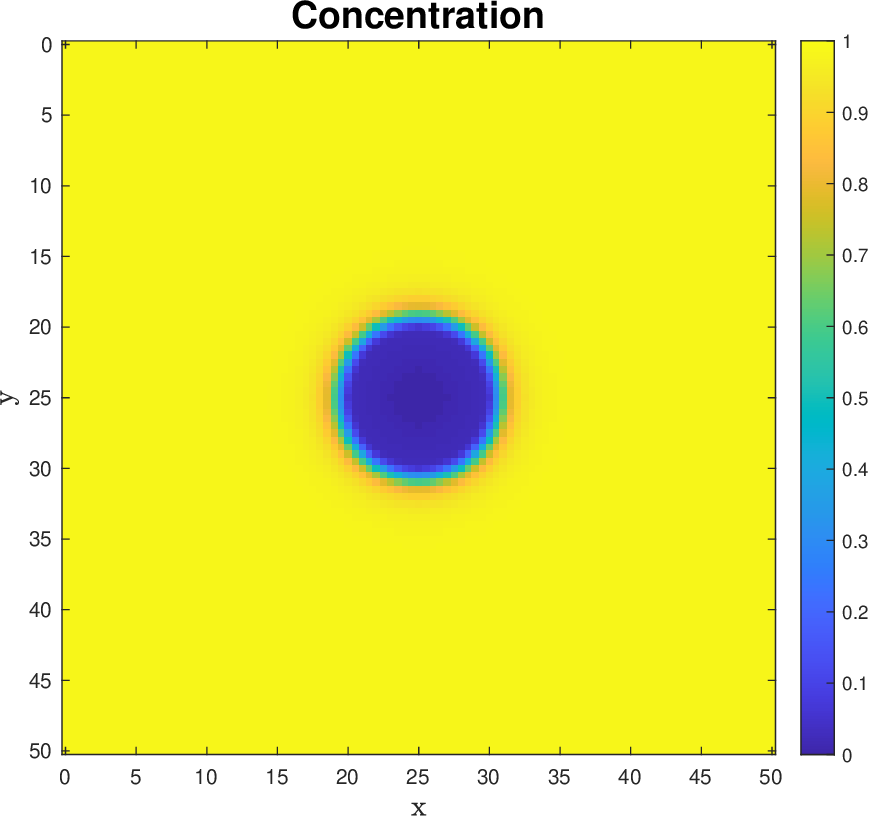}
			&\includegraphics[height=.22\textheight,clip,trim={35 35 50 25}]{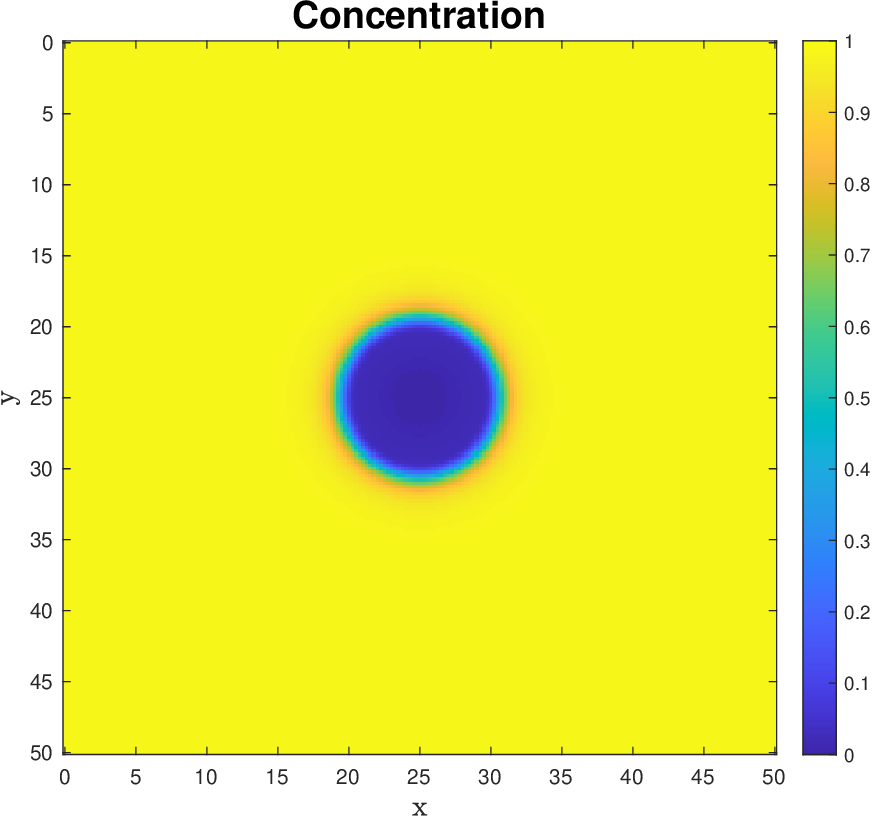} &\includegraphics[height=.22\textheight,width=.7cm]{color.pdf}
	\end{tabular} }
	\caption{Terminal-time ($t=15$) snapshots of $\phi$ (left) and $c$ (right) for $\delta=3$ with $h=1.0,0.5,0.25$.}
	\label{fig:endpoint_delta}
\end{figure}
These visualizations, together with the convergence plot in Figure~\ref{fig:conv_L2}, confirm that as $\delta\to0$ (at fixed $h$) or as $h\to0$ (at fixed $\delta$), the nonlocal FD solution approaches the local FD solution, and illustrate how the interface and concentration fields sharpen and converge in the limit. 

\section{Conclusion and Perspectives}
\label{sec:conclusion}
In this work, we have developed a unified nonlocal framework for phase-field corrosion modeling that naturally bridges to peridynamic approaches. By introducing a finite interaction horizon $\delta$, our model captures long-range chemical interactions and diffusion, yielding a rigorous nonlocal evolution. We have rigorously shown via formal Gateaux differentiation and careful Taylor expansions that, as $\delta \to 0$, the nonlocal evolution equations converge to the classical Allen--Cahn and Cahn--Hilliard limits at an explicit $O(\delta)$ rate. Numerically, using explicit finite-difference discretizations, we have confirmed convergence of the nonlocal solutions to their local counterparts and demonstrated how the horizon $\delta$
and grid spacing $h$ jointly govern pit growth and concentration profiles, establishing a solid theoretical and computational bridge between nonlocal phase-field models and peridynamic corrosion frameworks.

This work lays a rigorous and versatile foundation for building a comprehensive nonlocal toolset. A key goal is to extend this unified theory to encompass coupled electrochemical transport, mechanical deformation, and multicomponent chemistry. In particular, integrating our recent development of a nonlocal Nernst--Planck--Poisson (NNPP) system for multion transport with this KKS-style phase-field model—while also incorporating elastic and plastic mechanics in a nonlocal setting—would enable the simulation of moving boundaries, pH-dependent reactions, and mechanical stresses within a single consistent theory. Such an extended model could capture not only peridynamic-style pit growth but also coupled electromigration, stress-affected dissolution, and the evolution of a diffuse corrosion layer.

Future work will therefore focus on this integration, alongside pursuing rigorous error estimates for fully discrete schemes, extension to three-dimensional geometries, and validation against experimental data for multi-ion, pH-coupled corrosion (e.g., Mg-based biomaterials). We also plan to explore alternative nonlocal kernels—anisotropic or weighted by material microstructure—to enhance modeling flexibility. Ultimately, by unifying phase-field, peridynamic, and NNPP approaches, we aim to provide a comprehensive framework for simulating complex, multiscale corrosion processes across engineering and biomedical applications.

\section*{Acknowledgements}
This work was supported by the IDIR-Project (Digital Implant Research), a cooperation financed by Kiel University, University Hospital Schleswig-Holstein and Helmholtz Zentrum Hereon. Funded by the Deutsche Forschungsgemeinschaft (DFG, German Research Foundation) - 470246804.

\bibliography{ref.bib}
\bibliographystyle{siam}

\end{document}